\newtheorem{theorem}{Theorem}[section]
\newtheorem{corollary}[theorem]{Corollary}
\newtheorem{lemma}[theorem]{Lemma}
\newtheorem{proposition}[theorem]{Proposition}
\theoremstyle{definition}
\newtheorem{definition}[theorem]{Definition}
\newtheorem{example}[theorem]{Example}
\newtheorem{remark}[theorem]{Remark}
\numberwithin{equation}{section}
\title[On $d$--$\sigma$--stability in random metric spaces and its applications ]{On $d$--$\sigma$--stability in random metric spaces and its applications$^*$}
\author[T. X. Guo]{Tiexin Guo$^1$}
\address[T. X. Guo]{School of Mathematics and Statistics, Central South University,
 Changsha 410083, China}
\email{{\tt tiexinguo@csu.edu.cn}}
\thanks{$^*$This paper is supported by the NNSF of China (No. 11571369).}
\thanks{$^1$ Corresponding author}
\author[E. X. Zhang]{Erxin Zhang}
\address[E. X. Zhang]{School of Mathematics and Statistics, Central South University,
 Changsha 410083, China}
\email{\tt zhangerxin6666@163.com}
\author[Y. C. Wang]{Yachao Wang}
\address[Y. C. Wang]{School of Mathematics and Statistics, Central South University,
 Changsha 410083, China}
\email{\tt wychao@csu.edu.cn}
\author[B. X. Yang]{Bixuan Yang}
\address[B. X. Yang]{School of Mathematics and Statistics, Central South University,
 Changsha 410083, China}
\email{\tt bixuanyang@126.com}
\keywords{$d$--$\sigma$--stability, random metric spaces, Ekeland's variational principle, fixed point theorems, random operators.}
\subjclass[2010]{46A19, 47H10, 47H09, 60H25.}
\begin{document}

\begin{abstract}
In 2010, the first author of this paper introduced the notion of $\sigma$--stability for a nonempty subset of an $L^0(\mathcal{F},K)$--module in [T.X. Guo, Relations between some basic results derived from two kinds of topologies for a random locally convex module, J. Funct. Anal. 258(2010), 3024--3047], this kind of $\sigma$--stability is purely algebraic and leads to a series of deep developments of random normed modules and random locally convex modules. Motivated by this, A. Jamneshan, M. Kupper and J. M. Zapata recently introduced another kind of $\sigma$--stability for a nonempty subset of a random metric space $(E,d)$, called $d$--$\sigma$--stability since it depends on the random metric $d$. $d$--$\sigma$--stability coincides with the previous $\sigma$--stability in the case of random normed modules, which motivates us in this paper to generalize the precise form of Ekeland's variational principle from a complete random normed module to a complete $d$--$\sigma$--stable random metric space. Besides, this paper also utilize $d$--$\sigma$--stability to generalize Nadler's fixed point theorem for a multivalued contraction mapping from a complete metric space to a complete random metric space. To our surprise, our simple fixed point theorem, however, can derive the known basic fixed point theorems of contraction type for both random operators and $\sigma$--stable mappings on a complete random normed module. A lot of examples shows the study of random metric spaces is more complicated than that of random normed modules.
\end{abstract}

\maketitle


\section{Introduction}\label{section1}
Let $(\Omega, \mathcal{F}, P)$ be a probability space, $K$ the scalar field $R$ of real numbers or $C$ of complex numbers and $L^0(\mathcal{F}, K)$ the algebra of equivalence classes of $K$--valued random variables on $(\Omega, \mathcal{F}, P)$. For a left module $E$ over $L^0(\mathcal{F}, K)$ $($ briefly, an $L^0(\mathcal{F}, K)$-module $)$ and a nonempty subset $G$ of $E$, $G$ is said to be $\sigma$--stable $($ originally called `` having the countable concatenation property ''in \cite{Guo10}, see \cite[Def.3.1]{Guo10}$)$ if there exists some $x \in G$ such that $\tilde{I}_{A_n} \cdot x = \tilde{I}_{A_n} \cdot x_n$ for each $n \in N$, for each sequence $\{ x_n : n \in N \}$ in $G$ and each countable partition $\{ A_n : n \in N \}$ of $\Omega$ to $\mathcal{F}$. The initial aim of introducing $\sigma$--stability in \cite{Guo10} is to establish the inherent connections between some basic results derived from two kinds of topologies for a random normed module or $($ more generally $)$ a random locally convex module, a series of subsequent developments have attested crucial roles played by the notion of $\sigma$--stability, see, e.g. \cite{CKV15,DJKK16,DKKS13,FM14a,FM14b,Guo13,GY12,GZWG18,GZWYYZ17,GZZ14,WG15,Z17}. Clearly, $\sigma$--stability only depends on the $L^0(\mathcal{F}, K)$--module structure and thus is purely algebraic.

It is well known that Banach's contraction mapping principle \cite{B22} and Ekeland's variational principle \cite{BAX66} on a complete metric space are two of the most powerful tools in functional analysis. Random metric spaces $($ briefly, $RM$ spaces $)$ are a random generalization of ordinary metric spaces. Roughly speaking, an $RM$ space with base $(\Omega, \mathcal{F}, P)$ is an ordered pair $(E,d)$ such that the random metric $d : E \times E \to L^0_+(\mathcal{F}) := \{ \xi \in L^0(\mathcal{F},R) : \xi \geq 0 \}$ satisfies the axioms similar to those satisfied by an ordinary metric, see Section \ref{section2} of this paper. The two principles stated above are already generalized to complete $RM$ spaces \cite{Guo99,GY12}. But when we recently used the result of \cite{Guo99} to study the existence and uniqueness of a class of backward stochastic equations in \cite{GZWG18}, where we were forced to consider a kind of random contraction mapping on a $\sigma$--stable subset of a complete random normed module $($ briefly, $RN$ module $)$ since the random iteration of such a mapping heavily depends on $\sigma$--stability. Likewise, when Guo and Yang \cite{GY12} attempted to establish the precise form of Ekeland's variational principle, they could only give the corresponding result for $\sigma$--stable complete $RN$ modules since $\sigma$--stability was essential. $RN$ modules are a class of important $RM$ spaces, we naturally would like to generalize some basic results involved in \cite{GY12,GZWG18} to general complete $RM$ spaces $($ namely not just complete $RN$ modules $)$, but the problem is that the notion of $\sigma$--stability introduced in \cite{Guo10} is not applicable to general $RM$ spaces since they are not necessarily $L^0(\mathcal{F}, K)$--modules in general. Recently, A. Jamneshan, M. Kupper and J. M. Zapata introduced in \cite{JKZ18} another kind of $\sigma$--stability for a nonempty subset of an $RM$ space, called $d$--$\sigma$--stability since it only depends on the random metric $d$. It is not difficult to see that $d$--$\sigma$--stability coincides with $\sigma$--stability in the case of $RN$ modules. With the notion of $d$--$\sigma$--stability, we are able to generalize some basic results in \cite{GY12,GZWG18} to a $d$--$\sigma$--stable complete $RM$ space. Besides, we are also able to generalize Nadler's elegant fixed point theorem \cite{N69} for multivalued contraction mappings from a complete metric space to a complete $RM$ space, to our surprise, our result can derive the well--known random fixed point theorems such as those given by O.Han\v{s} \cite{H61} and by S.Iton \cite{I77} as well as the fixed theorem of contraction type \cite{GZWG18} for $\sigma$--stable mappings on complete $RN$ modules.

Now, random functional analysis $($ according to Guo \cite{Guo93,GZWYYZ17}, which can be aptly defined as functional analysis based on $RM$ spaces, $RN$ modules, random inner product modules $($ $RIP$ modules $)$ and random locally convex modules $)$ has undergone a systematic and deep development. Some important advances in random functional analysis can be briefly surveyed as follows in order for the scholars working in nonlinear analysis and fixed point theory to have a rapid understanding.

$RM$ spaces and random normed spaces $($ briefly, $RN$ spaces $)$ were born in the course of the development of the theory of probabilistic metric spaces $($ briefly, $PM$ spaces $)$. The theory of $PM$ spaces was initiated by K. Menger in 1942 and subsequently founded by B. Schweizer, A. Sklar and the others, see \cite{SS8305} for a detailed historical survey on $PM$ spaces. A class of special $RM$ spaces $($ called uniform $RM$ spaces $)$ was first considered by A. \v{S}pac\v{e}k \cite{S55,S56}, the notion of a general $RM$ space was presented in \cite[Def.9.3.1]{SS8305}, where the random distance between two points in an $RM$ space is defined as a nonnegative random variable, similarly, the notion of an $RN$ space can be found in \cite[Chapt.15]{SS8305}. According to the tradition from the theory of $PM$ spaces, a $PM$ space is endowed with the $(\varepsilon,\lambda)$--topology introduced by B. Schweizer and A. Sklar in 1960. Therefore, an $RM$ space $($ regarded as a special $PM$ space$)$ and an $RN$ space $($ regarded as a spacial probabilistic normed space$)$ are often endowed with the $(\varepsilon,\lambda)$--topology. For a rather long time, the theory of $RN$ spaces did not obtain any substantial advances mainly because $RN$ spaces under the $(\varepsilon,\lambda)$--topology are not locally convex and even have trivial duals in general. The first important advance came in \cite{Guo89}, where Guo introduced the notion of an almost surely bounded random linear functional for $RN$ spaces and established the Hahn--Banach theorem for such random linear functionals. This leads to the theory of random conjugate spaces, whose further development also leads Guo to the notions of $RN$ modules and $RIP$ modules \cite{YZG91,Guo92}. The importance of $RN$ modules lies in that their module structure can make their random conjugate spaces and general continuous module homomorphisms on them so deeply developed that their theory is comparable to the corresponding theory of normed spaces, for example, Riesz's representation theorem on complete $RIP$ modules \cite{GY96} $($ where we should also mention Hansen and Richard's independent work on a class of spacial complete $RIP$ modules, called conditional Hilbert spaces constructed from the generalized conditional expectation, and its applications to finance \cite{HR87} $)$, the representation theorem of random conjugate spaces for a class of special $RN$ modules $L^0(\mathcal{F},B)$ \cite{Guo96}, the James' theorem characterizing random reflexivity of a complete $RN$ module \cite{GL05}, a random locally convex module as a random analogue of a locally convex space was subsequently presented and a separation theorem between a point and a closed $L^0$--convex subset was established in \cite{GXC09}, see \cite{Guo96a} for continuous module homomorphisms and applications of $RM$ spaces to probabilistic functional analysis initiated by \v{A}. \v{S}pac\v{e}k \cite{S55} and O. Han\v{s} \cite{H61}. Here, we should also mention R. Haydon, M. Levy and R. Raynaud's important work \cite{HLR91}, whose work is completely independent of the theory of $PM$ spaces and Guo's work, and who also presented the idea of $RN$ modules and established a lot of deep results. All the work on $RN$ modules before 2009 was developed under the $(\varepsilon,\lambda)$--topology.

The second important advance began with Filipovi\'{c}, Kupper and Vogeglpoth's work \cite{FKV09}. Motivated by financial applications, they introduced the notion of a locally $L^0$--convex module in 2009 in order to establish convex analysis over such a kind of topological module, see \cite{FKV09} for the rich financial background. Filipovi\'{c}, et.al's work \cite{FKV09} naturally leads to another kind of topology, called the locally $L^0$--convex topology, for a random locally convex module. Subsequently, Guo introduced the notion of $\sigma$--stability for a subset of an $L^0$--module and further established the inherent connection between some basic results derived from the two kinds of topologies--the $(\varepsilon,\lambda)$--topology and the locally $L^0$--convex topology for a random locally convex module, see \cite{Guo10} for details. Following Guo's work \cite{Guo10}, the subsequent development of random locally convex modules enters a new model, namely the theory of them was carried out under simultaneously considering the two kinds of topologies, see, e.g. \cite{GS11,GY12,Wu12,Wu13,GZZ14,Guo13,GZWYYZ17}, which in particular leads to a deep random convex analysis \cite{GZWYYZ17}.  Besides, the notion of $\sigma$--stability has played some crucial roles in a series of subsequent work, see, e.g.\cite{FM14a,FM14b,CKV15,DJKK16,DKKS13}. Finally, the notion of relative $\sigma$--stability was introduced independently by Wu and Guo \cite{WG15} and by Zapata \cite{Z17} and used to prove that the principle part of the theory of locally $L^0$--convex modules is equivalent to the theory of random locally convex modules endowed with the locally $L^0$--convex topology.

The remainder of this paper is organized as follows: Section \ref{section2} is devoted to discussing some basic problems closely related to $d$--$\sigma$--stability, for example, the connection between completeness with respect to the two kinds of uniformity induced by a random metric defined on a $d$--$\sigma$--stable $RM$ space. Section \ref{section3} can be regarded as applications of $d$--$\sigma$--stability, precisely speaking, Section \ref{section3} is first devoted to giving the precise form of Ekeland's variational principle on a $d$--$\sigma$--stable $RM$ space, and then to generalizing Nadler's fixed point theorem from a complete metric space to a complete $RM$ space, where a series of interesting corollaries of our result are given and the related known random fixed point theorems and concepts of random elements and random operators are mentioned when they are used.

\section{$d$--$\sigma$--stability for a subset of an $RM$ space}\label{section2}
In the sequel of this paper, $(\Omega, \mathcal{F}, P)$ always denotes a given probability space, $K$ the scalar field $R$ of real numbers or $C$ of complex numbers, $L^0(\mathcal{F}, K)$ the algebra of equivalence classes of $K$--valued $\mathcal{F}$--measurable random variables on $(\Omega, \mathcal{F}, P)$ and $\bar{L}^0(\mathcal{F})$ the set of equivalence classes of extended real--valued $\mathcal{F}$--measurable random variables on $(\Omega, \mathcal{F}, P)$. Specially, we simply write $L^0(\mathcal{F})$ for $L^0(\mathcal{F}, R)$.

Just as $\bar{R}:= [-\infty,+\infty]$ is a complete lattice under the ordinary total order $( R$ is Dedekind complete, namely the supremum or infimum principle $)$, it is well known from \cite{DS58} that $\bar{L}^0(\mathcal{F})$ is a complete lattice under the partial order $\leq : \xi \leq \eta$ if and only if $($ briefly, iff $)$ $\xi^0(\omega) \leq \eta^0(\omega)$ for almost all $\omega$ in $\Omega ~($ briefly, $\xi^0 \leq \eta^0$ a.s. $)$, where $\xi^0$ and $\eta^0$ are arbitrarily chosen representatives of $\xi$ and $\eta$, respectively. In particular, $L^0(\mathcal{F})$, as a sublattice of $\bar{L}^0(\mathcal{F})$, is Dedekind complete.

For any nonempty subset $H$ of $\bar{L}^0(\mathcal{F})$, $\bigvee H$ and $\bigwedge H$ stand for the supremum and infimum of $H$, respectively. Proposition \ref{proposition2.1} below surveys the nice properties of the lattice $\bar{L}^0(\mathcal{F})$, which will be frequently used in this paper.
\begin{proposition}\label{proposition2.1} \cite{DS58}.
Let $H$ be a nonempty subset of $\bar{L}^0(\mathcal{F})$, then the following hold:
\begin{enumerate}[(1)]
\item There exist two sequences $\{ a_n: n \in N \}$ and $\{ b_n: n \in N \}$ in $H$ such that $\bigvee_{n \geq 1} a_n = \bigvee H$ and $\bigwedge_{n \geq 1}b_n = \bigwedge H$.
\item If $H$ is directed upwards $($ or downwards $)$, then $\{ a_n: n \in N \}$ $($ correspondingly, $\{ b_n: n \in N \})$  in $($1$)$ can be chosen as nondecreasing $($ nonincreasing $)$.
\end{enumerate}
\end{proposition}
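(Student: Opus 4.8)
The plan is to reduce the statement to a maximizing-sequence (exhaustion) argument for the essential supremum of a family of integrable functions, and then to treat directedness by a routine inductive re-selection. First I would dispose of the extended-real values and of integrability simultaneously: fix a strictly increasing continuous bijection $\varphi : \bar{R} \to [0,1]$ (for instance $\varphi(x) = \frac12\big(1 + \frac{x}{1+|x|}\big)$ with $\varphi(\pm\infty)$ the endpoints), which is an order isomorphism and therefore carries suprema to suprema and infima to infima, both pointwise and in the lattice $\bar{L}^0(\mathcal{F})$. Applying $\varphi$ to every representative replaces $H$ by a family $\varphi(H)$ whose representatives take values in $[0,1]$, hence are integrable; since $\varphi$ commutes with $\bigvee$ and $\bigwedge$ and is a bijection, it suffices to prove both assertions for such a bounded family, and by symmetry (replacing $\xi$ by $1-\xi$) it suffices to treat the supremum.

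So assume every $h \in H$ has a representative valued in $[0,1]$. Let $\mathcal{D}$ denote the set of all elements of the form $\bigvee_{k} h_k$ with $\{h_k : k \in N\}$ a (finite or countable) subfamily of $H$; each such countable pointwise supremum is a well-defined element of $L^0(\mathcal{F})$ with representative in $[0,1]$. Put
$$\alpha := \sup\Big\{ \int_\Omega g\, dP : g \in \mathcal{D}\Big\} \in [0,1],$$
a finite real number, and choose $g_m \in \mathcal{D}$ with $\int_\Omega g_m\, dP \uparrow \alpha$. Writing $g_m = \bigvee\{h : h \in H_m\}$ for a countable $H_m \subseteq H$, the union $\bigcup_m H_m$ is countable; enumerate it as $\{a_n : n \in N\}$ and set $a^\ast := \bigvee_{n} a_n$. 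Then $a^\ast \in \mathcal{D}$, $a^\ast \geq g_m$ for all $m$, so $\int_\Omega a^\ast\, dP = \alpha$.

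The heart of the argument is to verify $a^\ast = \bigvee H$. The element $a^\ast$ dominates every $a_n$; I must show it dominates every $h \in H$. For such $h$ the join $a^\ast \vee h$ again lies in $\mathcal{D}$, so $\int_\Omega (a^\ast \vee h)\, dP \leq \alpha = \int_\Omega a^\ast\, dP$; combined with $a^\ast \vee h \geq a^\ast$ and integrability this forces $a^\ast \vee h = a^\ast$ a.s., i.e. $h \leq a^\ast$. Hence $a^\ast$ is an upper bound of $H$, and it is the least one, since any upper bound of $H$ dominates each $a_n$ and therefore their countable supremum $a^\ast$. Thus $\bigvee H = \bigvee_{n} a_n$, which is (1) for the supremum; transporting back through $\varphi^{-1}$ gives (1) in $\bar{L}^0(\mathcal{F})$, and the infimum statement follows by symmetry. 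I expect this domination step --- turning an inequality of integrals into an a.s. inequality of functions --- to be the only genuinely delicate point, as it is exactly where integrability (secured by the reduction to $[0,1]$) is indispensable.

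Finally, for (2) suppose $H$ is directed upwards and let $\{a_n : n \in N\}$ be the sequence produced in (1). I would select a nondecreasing sequence in $H$ with the same supremum by induction: set $c_1 = a_1$, and given $c_{n-1}$ use upward directedness to pick $c_n \in H$ with $c_n \geq c_{n-1}$ and $c_n \geq a_n$. Then $\{c_n\}$ is nondecreasing, lies in $H$, and satisfies $\bigvee_n a_n \leq \bigvee_n c_n \leq \bigvee H = \bigvee_n a_n$, so $\bigvee_n c_n = \bigvee H$. The downward-directed case, giving a nonincreasing sequence for the infimum, is entirely symmetric.
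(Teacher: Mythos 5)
Your proof is correct, but there is nothing in the paper to compare it against: Proposition \ref{proposition2.1} is stated as a known property of the complete lattice $\bar{L}^0(\mathcal{F})$ and is simply cited to Dunford and Schwartz \cite{DS58}, with no internal argument. What you have supplied is essentially the classical essential-supremum construction that underlies that citation, and every step checks out: the order isomorphism $\varphi:\bar{R}\to[0,1]$ legitimately transports the lattice structure (it preserves the a.s.\ pointwise order of representatives, hence all suprema and infima), so the reduction to integrable, $[0,1]$-valued classes is sound; the join $a^\ast\vee h$ does lie in $\mathcal{D}$ because $\{a_n:n\in N\}\cup\{h\}$ is still a countable subfamily of $H$; the step from $\int_\Omega(a^\ast\vee h)\,dP\leq\int_\Omega a^\ast\,dP$ and $a^\ast\vee h\geq a^\ast$ to $h\leq a^\ast$ a.s.\ is exactly where integrability is used, as you note; the least-upper-bound property follows because the lattice supremum of a countable family coincides with the equivalence class of the pointwise supremum of representatives; and in part (2) the inductive choice of $c_n\in H$ with $c_n\geq c_{n-1}$ and $c_n\geq a_n$ uses upward directedness correctly (you do not need $c_{n-1}\vee a_n$ itself to lie in $H$, only some majorant of the pair). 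The trade-off is the expected one: the paper buys brevity by outsourcing the lemma to the literature, while your argument makes the paper self-contained at the cost of a page, and it makes visible the one nontrivial mechanism (maximizing an integral over countable suprema) that the paper's readers never see.
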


From now on, for any $A \in \mathcal{F}$, $I_A$ stands for the characteristic function of $A$, namely $I_A(\omega) = 1$ if $\omega \in A$, and 0 otherwise, $\tilde{I}_A$ denotes the equivalence class of $I_A$. Besides, we always make the following appointment:  $\xi > \eta$ on $A$ means $\xi^0 >\eta^0$ a.s. on $A$, where $A \in \mathcal{F}$ and $\xi^0$ and $\eta^0$ are arbitrarily chosen representatives of $\xi$ and $\eta$ in $\bar{L}^0(\mathcal{F})$, respectively.

Finally, we also employ the following notations:

$L^0_+(\mathcal{F}) = \{ \xi \in L^0(\mathcal{F})\ :\ \xi \geq 0 \}$;

$L^0_{++}(\mathcal{F}) = \{ \xi \in L^0(\mathcal{F})\ :\ \xi > 0$ on  $ \Omega  \}$.

To develop random metric spaces in the direction of functional analysis, Guo first adopted Definition \ref{definition2.2} below of an $RM$ space, which is an equivalent formulation of the original definition of an $RM$ space \cite[Def.9.3.1]{SS8305}. Similarly, we also adopt an equivalent formulation of the original definition of an $RN$ space \cite[p.240]{SS8305}. Please refer to \cite{Guo99} for the reason of changing the original formulation of $RM$ and $RN$ spaces.
\begin{definition}\label{definition2.2}
An ordered pair $(E,d)$ is called an $RM$ space with base $(\Omega, \mathcal{F}, P)$ if $E$ is a nonempty set and $d$ is a mapping from $E \times E$ to $L^0_+(\mathcal{F})$ such that the following axioms are satisfied:
\begin{enumerate}[(RM--1)]
\item $d(p,q) = 0$ if $p=q$;
\item $d(p,q) = d(q,p)$ for all $p$ and $q$ in $E$;
\item $d(p,q)=0$ implies $p=q$;
\item $d(p,r) \leq d(p,q) + d(q,r)$ for all $p,q,r \in E$.
\end{enumerate}
As usual, $d$ is called the random metric $($ or , distance $)$ on $E$, if $(RM-3)$ is not satisfied, then $d$ is called a random pseudometric on $E$.
\end{definition}
\begin{definition}\cite{Guo89,Guo92,Guo93,YZG91,Guo99}\label{definition2.3}
An ordered pair $(E,\|\cdot\|)$ is called an $RN$ space over $K$ with base $(\Omega, \mathcal{F},P)$ if $E$ is a linear space over $K$ and $\|\cdot\|$ is a mapping from $E$ to $L^0_+(\mathcal{F})$ such that the following axioms are satisfied:
\begin{enumerate}[(RN-1)]
\item $\|\alpha \cdot  x \| = |\alpha| \cdot \|x\|$, $\forall \alpha \in K $ and $x\in E$;
\item $\|x\| = 0$ implies $x = \theta$ $($the null of $E$$)$;
\item $\|x + y\| \leq \|x\| + \|y\|$, $ \forall x,y \in E$.
\end{enumerate}
As usual, $\|\cdot\|$ is called the random norm on $E$. If $(RN-2)$ is not satisfied, then $\|\cdot\|$ is called a random seminorm on $E$.

Furthermore, if $E$ is, in addition, a left module over the algebra $L^0(\mathcal{F},K)$ $($ briefly, an $L^0(\mathcal{F},K)$--module $)$ and the $RN$ space $(E,\|\cdot\|)$ also satisfies the following axiom:
\begin{enumerate}[(RNM-1)]
\item $\|\xi \cdot x \| = |\xi| \cdot \|x\|$, $\forall \xi \in L^0(\mathcal{F},K)$ and $x\in E$.
\end{enumerate}
Then the $RN$ space $(E,\|\cdot\|)$ is called an $RN$ module over $K$ with base $(\Omega, \mathcal{F},P)$.
\end{definition}
\begin{remark}\label{remark2.4}
In the theory of $RN$ modules, we always adopts the convention `` identifying any $\alpha \in K$ with $\alpha \cdot \tilde{I}_{\Omega}$", thus $K$ can be regarded as a subalgebra of $L^0(\mathcal{F},K)$. Since $\tilde{I}_{\Omega}$ is the unit element of $L^0(\mathcal{F},K)$, $\tilde{I}_{\Omega} \cdot x =x$, $\forall x \in E$ $($ according to the definition of a module over an algebra with the unit element $)$, $(RNM-1)$ naturally strengthens $(RN-1)$. In the latter literature \cite{FKV09}, a random norm satisfying $(RNM-1)$ is called an $L^0$--norm, correspondingly, a random seminorm satisfying $(RNM-1)$ is called an $L^0$--seminorm. Subsequently, we adopt these terminologies such as `` $L^0$--norm" and ``$L^0$--seminorm" for convenience and brevity.
\end{remark}

The following notion of $\sigma$--stability for a subset of an $L^0(\mathcal{F},K)$--module has played a crucial role in the development of $RN$ modules and random locally convex modules since 2010.
\begin{definition}\cite[Definition3.1]{Guo10}\label{definition2.5}
Let $E$ be an $L^0(\mathcal{F},K)$--module and $G \subset E$ a nonempty subset. $G$ is said to be  $\sigma$--stable $($ please notice: $G$ is said to have the countable concatenation property in the original terminology of \cite{Guo10} $)$ if, for each sequence $\{ x_n : n \in N \}$ in $G$ and each countable partition $\{ A_n : n \in N \}$ of $\Omega$ to $\mathcal{F}$, there exists some $x$ in $G$ such that $\tilde{I}_{A_n} \cdot x = \tilde{I}_{A_n} \cdot x_n$ for each $n \in N$.
\end{definition}

There is also a weaker notion than $\sigma$--stability, namely the notion of stability: a nonempty subset $G$ of an $L^0(\mathcal{F},K)$--module $E$ is said to be stable if $\tilde{I}_A \cdot x_1 + \tilde{I}_{A^c} \cdot x_2 \in G$ for any $A \in \mathcal{F}$ and any $x_1,x_2 \in G$. Let $(B,\|\cdot\|)$ be a Banach space over $K$ and $L^0(\mathcal{F},B)$ the $L^0(\mathcal{F},K)$--module of equivalence classes of $B$--valued strong random elements on $(\Omega,\mathcal{F},P)$, then $L^0(\mathcal{F},B)$ becomes an $RN$ module over $K$ with base $(\Omega,\mathcal{F},P)$ in a natural way $($ see \cite{Guo10} $)$, see Example \ref{example2.8} below for the notion of a strong random element. For a nonempty subset $G$ of $L^0(\mathcal{F},B)$, the notion of stability for $G$ was earlier considered in \cite{DS06} for $B = R^d$ and in \cite{M05} for $G \subset L^p(\mathcal{F},B)( 1\leq p <+\infty$, please notice $L^p(\mathcal{F},B) \subset L^0(\mathcal{F},B))$, namely $G$ is stable iff $\tilde{I}_A \cdot x + \tilde{I}_{A^c} \cdot y \in G$ for any $A \in \mathcal{F}$ and any $x,y \in G$, where $L^p(\mathcal{F},B)$ is the ordinary Lebesgue--Bochner function space. Just as pointed out by Guo in \cite{Guo10}, if $E$ in Def.2.5 is an $RN$ module $(E,\|\cdot\|)($ or more generally, a random locally convex module $)$, then $x$ in Def.2.5 must be unique, at this time we always write $\sum^{\infty}_{n=1}\tilde{I}_{A_n} \cdot x_n$ for $x$. It is obvious that $\tilde{I}_{A_n} \cdot x = \tilde{I}_{A_n} \cdot x_n$ for each $n \in N$ iff $\tilde{I}_{A_n} \cdot \|x-x_n\| =0$ for each $n \in N$. Motivated by this, A. Jamneshan, M. Kupper and J. M. Zapata recently introduced the notion of $d$--$\sigma$--stability for a nonempty subset of a random metric space in \cite{JKZ18}:
\begin{definition} \cite{JKZ18} \label{definition2.6}
Let $(E,d)$ be an $RM$ space with base $(\Omega,\mathcal{F},P)$ and $G$ a nonempty subset of $E$. $G$ is said to be $d$--$\sigma$--stable if, for each sequence $\{ x_n : n \in N \}$ in $G$ and each countable partition $\{ A_n : n \in N \}$ of $\Omega$ to $\mathcal{F}$, there exists some $x$ in $G$ such that $\tilde{I}_{A_n} \cdot d(x,x_n) =0$ for each $n \in N$.
\end{definition}

Similarly, $G$ is said to be $d$--stable if for any two elements $x_1$ and $x_2$ in $G$ and any $A \in \mathcal{F}$ there exists some $x \in G$ such that $\tilde{I}_{A} \cdot d(x,x_1) =0$ and $\tilde{I}_{A^c} \cdot d(x,x_2) =0$. Let $(E,d)$ be an $RM$ space with base $(\Omega,\mathcal{F},P)$ and $E$ also an $L^0(\mathcal{F},K)$--module such that $d(\tilde{I}_{A} \cdot x, \tilde{I}_{A} \cdot y) = \tilde{I}_{A} \cdot d(x,y)$ for any $A \in \mathcal{F}$ and $x,y \in E$, then it is easy to see that a nonempty subset $G$ of $E$ is $d$--$\sigma$--stable iff $G$ is $\sigma$--stable in the sense of \cite[Def.3.1]{Guo10}, in particular an $RN$ module $(E,\|\cdot\|)$ over $K$ with base $(\Omega, \mathcal{F},P)$ is such an $RM$ space under the random metric $d : E \times E \to L^0_+(\mathcal{F})$ defined by $d(x,y) = \|x-y\|, \forall x,y \in E$. Thus, we continue to employ the terminology `` $\sigma$--stability" for an $RN$ modules, which would not cause any confusion. Theorem \ref{theorem2.7} below shows that $x$ in Def.2.6 must be unique when $G$ is $d$--$\sigma$--stable $($ at which time $x$ is denoted by $\sum^{\infty}_{n=1}\tilde{I}_{A_n} \cdot x_n$ $)$ or when $G$ is $d$--stable $($ at which time $x$ is denoted by $\tilde{I}_{A} \cdot x_1 + \tilde{I}_{A^c} \cdot x_2~)$, and thus the requirement in \cite{JKZ18} that $x$ is unique is superfluous.

\begin{theorem}\label{theorem2.7}
Let $(E,d)$ and $G$ be the same as in Def. \ref{definition2.6}. Then we have the following assertions:
\begin{enumerate}[(1)]
\item When $G$ is $d$--$\sigma$--stable or $d$--stable, $x$ in Def. \ref{definition2.6} must be unique.
\item $G$ is $d$--stable iff for each positive integer $n$, each finite subset $\{ x_1, x_2, \cdots x_n \}$ of $G$ and each finite partition $\{ A_1, A_2, \cdots A_n \}$ of $\Omega$ to $\mathcal{F}$, there exists unique one $x$ in $G$ such that $\tilde{I}_{A_i} \cdot d(x,x_i) =0$ for each $i \in \{ 1, 2, \cdots n \}$.
\end{enumerate}
\end{theorem}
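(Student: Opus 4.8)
The plan is to deduce both assertions from the triangle inequality (RM--4) together with the order and algebraic structure of $L^0_+(\mathcal{F})$, while being careful that in a general $RM$ space we have \emph{no} compatibility relation of the form $d(\tilde{I}_A\cdot x,\tilde{I}_A\cdot y)=\tilde{I}_A\cdot d(x,y)$ at our disposal (indeed $E$ need not even be an $L^0(\mathcal{F},K)$--module). Hence every manipulation must keep the two points of $d$ fixed and insert indicators only as nonnegative multipliers, relying on the fact that order in $L^0_+(\mathcal{F})$ is preserved under multiplication by a nonnegative element.

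For assertion (1), suppose in the $d$--$\sigma$--stable case that both $x$ and $x'$ satisfy $\tilde{I}_{A_n}\cdot d(x,x_n)=0$ and $\tilde{I}_{A_n}\cdot d(x',x_n)=0$ for every $n$. I would multiply the triangle inequality $d(x,x')\le d(x,x_n)+d(x_n,x')$ by the nonnegative element $\tilde{I}_{A_n}$, obtaining $0\le \tilde{I}_{A_n}\cdot d(x,x')\le \tilde{I}_{A_n}\cdot d(x,x_n)+\tilde{I}_{A_n}\cdot d(x_n,x')=0$, hence $\tilde{I}_{A_n}\cdot d(x,x')=0$ for each $n$. Since $\{A_n\}$ partitions $\Omega$, at almost every $\omega$ exactly one indicator equals $1$, so $d(x,x')=0$ a.s., and (RM--3) gives $x=x'$. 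The $d$--stable case is identical, using the two--set partition $\{A,A^c\}$ and adding the two resulting identities $\tilde{I}_A\cdot d(x,x')=0$ and $\tilde{I}_{A^c}\cdot d(x,x')=0$.

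For assertion (2), the direction ``$\Leftarrow$'' is immediate: specializing the finite condition to $n=2$, to the pair $\{x_1,x_2\}$ and to the partition $\{A,A^c\}$ returns precisely the defining property of $d$--stability (existence, with uniqueness a bonus). For ``$\Rightarrow$'' I would argue by induction on the number $n$ of cells, the case $n=1$ being trivial ($x=x_1$) and $n=2$ being $d$--stability itself. Given a partition $\{A_1,\dots,A_{n+1}\}$ and points $\{x_1,\dots,x_{n+1}\}$, I first apply the induction hypothesis to the coarser $n$--cell partition $\{A_1,\dots,A_{n-1},A_n\cup A_{n+1}\}$ with points $x_1,\dots,x_{n-1},x_n$ to obtain $y\in G$ with $\tilde{I}_{A_i}\cdot d(y,x_i)=0$ for all $i\le n$ (for $i=n$ this uses $A_n\subseteq A_n\cup A_{n+1}$). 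Then I apply $d$--stability to the pair $y,x_{n+1}$ and the set $A_{n+1}$, producing $x\in G$ with $\tilde{I}_{A_{n+1}}\cdot d(x,x_{n+1})=0$ and $\tilde{I}_{A_{n+1}^c}\cdot d(x,y)=0$. A final triangle estimate $\tilde{I}_{A_i}\cdot d(x,x_i)\le \tilde{I}_{A_i}\cdot d(x,y)+\tilde{I}_{A_i}\cdot d(y,x_i)=0$ for each $i\le n$ (using $A_i\subseteq A_{n+1}^c$) shows $x$ meets all $n+1$ requirements, and uniqueness is handled exactly as in (1). I expect this patching step --- choosing which cells to merge so that a single application of $d$--stability repairs the cell $A_{n+1}$ while the triangle inequality preserves the others --- to be the main point requiring care.
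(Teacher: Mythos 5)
Your proposal is correct and follows essentially the same route as the paper's own proof: part (1) via the triangle inequality multiplied by the indicators $\tilde{I}_{A_n}$ and then summed over the partition, and part (2) by induction, merging $A_n$ and $A_{n+1}$ into a single cell to invoke the induction hypothesis (producing $y$), then one application of $d$--stability with the two--cell partition $\{A_1\cup\cdots\cup A_n,\,A_{n+1}\}$ followed by the same triangle estimates. The paper's proof is exactly this argument written with the concatenation notation $\sum_i \tilde{I}_{A_i}\cdot x_i$, so no further changes are needed.
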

\begin{proof}
We only give the proof of $(1)$ for $d$--$\sigma$--stability, the proof of $(1)$ for $d$--stability is similar. Let $\{ x_n : n \in N \}$ and $\{ A_n : n \in N \}$ be the same as in Def.2.6, and further suppose that $x$ and $y$ are in $G$ such that $\tilde{I}_{A_n} \cdot d(x,x_n) =0$ and $\tilde{I}_{A_n} \cdot d(y,x_n) =0$ for each $n \in N$. Then by the triangle inequality $(RM-4)$ one has $\tilde{I}_{A_n} \cdot d(x,y) \leq \tilde{I}_{A_n} \cdot d(x,x_n) + \tilde{I}_{A_n} \cdot d(x_n,y) =0$ for each $n \in N$, which further implies that $d(x,y) = (\sum^{\infty}_{n=1}\tilde{I}_{A_n}) \cdot d(x,y) = \sum^{\infty}_{n=1}\tilde{I}_{A_n} \cdot d(x,y) =0$, namely $x=y$ by $(RM-3)$.

(2). It only needs to prove necessity. The necessity for $n=1$ holds trivially, and $d$--stability of $G$ amounts to the necessity for $n=2$. We will complete our proof by induction method, for this we suppose that $n \geq 2$ is a positive integer such that the necessity holds for any positive integer $l \leq n$. Then for any finite subset $\{ x_1, x_2, \cdots x_{n+1}\}$ of $G$ and any finite partition $\{ A_1, A_2, \cdots A_{n+1} \}$ of $\Omega$ to $\mathcal{F}$, there exists unique one $y$ in $G$ such that $y = \sum^{n-1}_{i=1}\tilde{I}_{A_i} \cdot x_i + \tilde{I}_{A_n \cup A_{n+1}} \cdot x_n$ and there exists unique one $x$ in $G$ such that $x = \tilde{I}_{A_1 \cup A_2 \cdots \cup A_n} \cdot y +\tilde{I}_{A_{n+1}} \cdot x_{n+1}$, namely, one has the following:
\begin{enumerate}[(I)]
\item $\tilde{I}_{A_i} \cdot d(y,x_i)=0$ for each $i \in \{ 1, 2, \cdots n-1 \}$;
\item $\tilde{I}_{A_n \cup A_{n+1}} \cdot d(y,x_n)=0$;
\item $\tilde{I}_{A_1 \cup A_2 \cdots \cup A_n} \cdot d(x,y)=0$ and $\tilde{I}_{A_{n+1}} \cdot d(x,x_{n+1})=0$.
\end{enumerate}

By (I) and the first equality of (\uppercase\expandafter{\romannumeral3}), one has $\tilde{I}_{A_i} \cdot d(y,x_i)=0$ and $\tilde{I}_{A_i} \cdot d(x,y)=0$ for each $i \in \{ 1, 2, \cdots n-1 \}$, so that $\tilde{I}_{A_i} \cdot d(x,x_i)=0$ for each $i \in \{ 1, 2, \cdots n-1 \}$. By (II) and the first equality of (III), one has $\tilde{I}_{A_n} \cdot d(y,x_n)=0$ and $\tilde{I}_{A_n} \cdot d(x,y)=0$, namely $\tilde{I}_{A_n} \cdot d(x,x_n)=0$. This, combined with the second equality of (III), shows that there exists unique one $x$ in $G$ such that $\tilde{I}_{A_i} \cdot d(x,x_i)=0$ for each $i \in \{ 1, 2, \cdots n+1 \}$, which ends the proof of necessity.
\end{proof}

Following are two nontrivial examples of $d$--$\sigma$--stable sets.
\begin{example}\label{example2.8}
Let $(M,d)$ be a metric space. A mapping $V$ from $(\Omega,\mathcal{F},P)$ to $M$ is called a random element if $V^{-1}(G) = \{ \omega \in \Omega : V(\omega) \in G \} \in \mathcal{F}$ for any $d$--open set $G$ of $M$. A random element $V$ is said to be simple if $V$ only takes finitely many values. Further, a random element $V$ is said to be strong if $V$ is the pointwise limit of a sequence of simple random elements. It is known from \cite{BR76} that a random element is strong iff its range is a separable subset of $M$, and thus when $(M,d)$ is separable the notion of a random element coincides with that of a strong random element. Let $L^0(\mathcal{F},M)$ be the set of equivalence classes of strong random element from $(\Omega,\mathcal{F},P)$ to $(M,d)$. For any $x$ and $y$ in $L^0(\mathcal{F},M)$, let $x^0$ and $y^0$ be respectively arbitrarily chosen representatives of $x$ and $y$, and let $d(x,y)$ denote the equivalence class of $d(x^0(\cdot),y^0(\cdot))$, then $(L^0(\mathcal{F},M),d)$ becomes an $RM$ space with base $(\Omega,\mathcal{F},P)$ and $L^0(\mathcal{F},M)$ is always $d$--$\sigma$--stable. In fact, for any sequence $\{ x_n : n \in N \}$ in $L^0(\mathcal{F},M)$, arbitrarily choose a representative $x^0_n$ of $x_n$ for each $n \in N$, then for any countable partition $\{ A_n : n \in N \}$ of $\Omega$ to $\mathcal{F}$, define $x^0 : \Omega \to M$ by $x^0(\omega) = x^0_n(\omega)$ when $\omega \in A_n$, it is easy to see that $x = \sum^{\infty}_{n=1}\tilde{I}_{A_n} \cdot x_n$, $($ where $x$ is the equivalence class of $x^0 )$, namely $\tilde{I}_{A_n} \cdot d(x,x_n)=0$ for each $n \in N$.
\end{example}
\begin{example} \label{example2.9}
Let $(M,d)$ be a complete separable metric space and $V : \Omega \to 2^M$ satisfy the following conditions: $V(\omega)$ is closed and nonempty for any $\omega \in \Omega$ and $V^{-1}(G) = \{ \omega \in \Omega : V(\omega) \cap G \neq \emptyset \} \in \mathcal{F}$ for any $d$--open set $G$ of $M$. It follows from \cite{H75} that $V$ has a measurable selection $\xi$, namely $\xi$ is a random element and $\xi(\omega) \in V(\omega)$ for each $\omega \in \Omega$. Let $H = \{ x \in L^0(\mathcal{F},M) : x$ is the equivalence class of some measurable selection of $V \}$, then similarly to Example \ref{example2.8} it can be proved that $H$ is a $d$--$\sigma$--stable subset of $L^0(\mathcal{F},M)$.
\end{example}
\begin{theorem}\label{theorem2.10}
Let $(E,d)$ be an $RM$ space with base $(\Omega,\mathcal{F},P)$ and $G$ a nonempty subset of $E$. Then we have the following statements:
\begin{enumerate}[(1)]
\item If $G$ is $d$--stable, then $L := \{ d(x,y) : (x,y) \in G \times G \}$ satisfies: $d(x_1,y_1) \bigwedge d(x_2,y_2) \\ \in L$ and $d(x_1,y_1) \bigvee d(x_2,y_2) \in L$ for any $(x_1,y_1)$ and $(x_2,y_2) \in G \times G$.
\item If $G$ is $d$--stable, then for any fixed $x_0 \in E$, $L := \{ d(x_0,y) : y \in G \}$ satisfies: $d(x_0,y_1)\bigwedge d(x_0,y_2) \in L$ and $d(x_0,y_1)\bigvee d(x_0,y_2) \in L$ for any $y_1$ and $y_2 \in G$.
\item If $G$ is $d$--$\sigma$--stable, then $L := \{ d(x,y) : (x,y) \in G \times G \}$ is a $\sigma$--stable subset of $L^0(\mathcal{F})$.
\item If $G$ is $d$--$\sigma$--stable, then for any fixed $x_0 \in E$, $L = \{ d(x_0,y) : y \in G \}$ is a $\sigma$--stable subset of $L^0(\mathcal{F})$.
\end{enumerate}
\end{theorem}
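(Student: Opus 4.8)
The plan is to extract a single \emph{localization} lemma and then read off all four parts from it. The lemma asserts: if $x,x'\in E$ and $A\in\mathcal{F}$ satisfy $\tilde{I}_A\cdot d(x,x')=0$, then $\tilde{I}_A\cdot d(x,z)=\tilde{I}_A\cdot d(x',z)$ for every $z\in E$. I would prove this directly from the triangle inequality $(RM\text{--}4)$: multiplying $d(x,z)\le d(x,x')+d(x',z)$ by $\tilde{I}_A\ge 0$ and using $\tilde{I}_A\cdot d(x,x')=0$ gives $\tilde{I}_A\cdot d(x,z)\le \tilde{I}_A\cdot d(x',z)$, while the reverse inequality follows by interchanging $x$ and $x'$ (and invoking the symmetry $(RM\text{--}2)$). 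The countable analogue is identical: if $\tilde{I}_{A_n}\cdot d(x,x_n)=0$ then $\tilde{I}_{A_n}\cdot d(x,z)=\tilde{I}_{A_n}\cdot d(x_n,z)$ for each $n$ and every $z\in E$.

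For part $(2)$, given $y_1,y_2\in G$ I would set $\alpha=d(x_0,y_1)$, $\beta=d(x_0,y_2)$ and pick $A\in\mathcal{F}$ with $\alpha\le\beta$ on $A$ and $\alpha>\beta$ on $A^c$. By $d$--stability there is $y:=\tilde{I}_A\cdot y_1+\tilde{I}_{A^c}\cdot y_2\in G$, and applying the lemma with $z=x_0$ on $A$ (to $y,y_1$) and on $A^c$ (to $y,y_2$) yields $\tilde{I}_A\cdot d(x_0,y)=\tilde{I}_A\cdot\alpha$ and $\tilde{I}_{A^c}\cdot d(x_0,y)=\tilde{I}_{A^c}\cdot\beta$, hence $d(x_0,y)=\tilde{I}_A\cdot\alpha+\tilde{I}_{A^c}\cdot\beta=\alpha\wedge\beta\in L$; choosing $A$ with $\alpha\ge\beta$ on $A$ gives $\alpha\vee\beta\in L$. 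Part $(1)$ is the same argument with two moving endpoints: with $\alpha=d(x_1,y_1)$, $\beta=d(x_2,y_2)$ and $A$ chosen so that $\alpha\le\beta$ on $A$, I would form $x=\tilde{I}_A\cdot x_1+\tilde{I}_{A^c}\cdot x_2$ and $y=\tilde{I}_A\cdot y_1+\tilde{I}_{A^c}\cdot y_2$ in $G$, then apply the lemma twice on $A$ (first to $x$ with $z=y$, giving $\tilde{I}_A\cdot d(x,y)=\tilde{I}_A\cdot d(x_1,y)$, then to $y$ with $z=x_1$, giving $\tilde{I}_A\cdot d(x_1,y)=\tilde{I}_A\cdot d(x_1,y_1)$) and symmetrically on $A^c$. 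This produces $d(x,y)=\tilde{I}_A\cdot\alpha+\tilde{I}_{A^c}\cdot\beta=\alpha\wedge\beta\in L$, and swapping the comparison set gives the join.

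Parts $(3)$ and $(4)$ run the same scheme with the countable version of the lemma and the given partition $\{A_n\}$ in place of a self-chosen $A$. For $(4)$, writing $\xi_n=d(x_0,y_n)$ with $y_n\in G$, I would form $y=\sum_{n=1}^{\infty}\tilde{I}_{A_n}\cdot y_n\in G$ by $d$--$\sigma$--stability and conclude $\tilde{I}_{A_n}\cdot d(x_0,y)=\tilde{I}_{A_n}\cdot\xi_n$ for every $n$, so that $d(x_0,y)=\sum_{n=1}^{\infty}\tilde{I}_{A_n}\cdot\xi_n\in L$ witnesses $\sigma$--stability of $L$. For $(3)$, with $\xi_n=d(x_n,y_n)$ I would form both $x=\sum_{n=1}^{\infty}\tilde{I}_{A_n}\cdot x_n$ and $y=\sum_{n=1}^{\infty}\tilde{I}_{A_n}\cdot y_n$ in $G$ and apply the countable lemma twice to obtain $\tilde{I}_{A_n}\cdot d(x,y)=\tilde{I}_{A_n}\cdot d(x_n,y_n)=\tilde{I}_{A_n}\cdot\xi_n$, giving $d(x,y)=\sum_{n=1}^{\infty}\tilde{I}_{A_n}\cdot\xi_n\in L$. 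I expect the only genuine content to lie in the localization lemma; once it is available each part reduces to selecting a comparison set (or using the supplied partition) and splicing. The point to watch is precisely that $d$ need not interact with any $L^0(\mathcal{F},K)$--module structure on $E$, so the "additivity'' of the spliced distance cannot be assumed and must be drawn entirely from the triangle inequality via the lemma.
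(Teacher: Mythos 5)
Your proposal is correct and follows essentially the same route as the paper: the paper also chooses the comparison set $A$ (respectively uses the given partition $\{A_n\}$), splices via $d$--stability (respectively $d$--$\sigma$--stability), and derives $\tilde{I}_A\cdot d(x,y)=\tilde{I}_A\cdot d(x_1,y_1)$ from the triangle inequality --- precisely your localization lemma, used inline rather than stated separately. The only difference is organizational: you isolate and prove the localization step explicitly, which the paper leaves as ``by the triangle inequality.''
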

\begin{proof}
(1). let $(x_1,y_1)$ and $(x_2,y_2) \in G \times G$ and $A =\{ \omega \in \Omega : d^0(x_1,y_1)(\omega) \leq d^0(x_2,y_2)(\omega) \}$, where $d^0(x_1,y_1)$ and $d^0(x_2,y_2)$ are respectively arbitrarily chosen representatives of $d(x_1,y_1)$ and $d(x_2,y_2)$. Then $d(x_1,y_1) \bigwedge d(x_2,y_2) = \tilde{I}_{A} \cdot d(x_1,y_1) + \tilde{I}_{A^c} \cdot d(x_2,y_2)$. Since $G$ is $d$--stable, there exist unique $x$ and $y$ in $G$ such that $x = \tilde{I}_{A} \cdot x_1 + \tilde{I}_{A^c} \cdot x_2$ and $y = \tilde{I}_{A} \cdot y_1 + \tilde{I}_{A^c} \cdot y_2$, namely, one has the following:
\begin{enumerate}[(i)]
\item $\tilde{I}_{A} \cdot d(x,x_1) = 0, \tilde{I}_{A^c} \cdot d(x,x_2) = 0$;
\item $\tilde{I}_{A} \cdot d(y,y_1) = 0, \tilde{I}_{A^c} \cdot d(y,y_2) = 0$.
\end{enumerate}

By the triangle inequality one can obtain $\tilde{I}_{A} \cdot d(x,y) = \tilde{I}_{A} \cdot d(x_1,y_1)$ and $\tilde{I}_{A^c} \cdot d(x,y) = \tilde{I}_{A^c} \cdot d(x_2,y_2)$, so $d(x,y) = \tilde{I}_{A} \cdot d(x,y) + \tilde{I}_{A^c} \cdot d(x,y) = \tilde{I}_{A} \cdot d(x_1,y_1) + \tilde{I}_{A^c} \cdot d(x_2,y_2) = d(x_1,y_1) \bigwedge d(x_2,y_2)$. Similarly, one also has $d(x_1,y_1) \bigvee d(x_2,y_2) \in L$.

(2). Proof is similar to that of (1) and more simple, so is omitted.

(3). Let $\{ (x_n,y_n) : n \in N \}$ be any sequence in $G \times G$ and $\{ A_n : n \in N \}$ any countable partition of $\Omega$ to $\mathcal{F}$. Since $G$ is $d$--$\sigma$--stable, there exist unique $x$ and $y$ in $G$ such that $x = \sum^{\infty}_{n=1}\tilde{I}_{A_n} \cdot x_n$ and $y = \sum^{\infty}_{n=1}\tilde{I}_{A_n} \cdot y_n$, namely $\tilde{I}_{A_n} \cdot d(x,x_n)=0$ and $\tilde{I}_{A_n} \cdot d(y,y_n)=0$ for each $n \in N$. Again by the triangle inequality one can have $\tilde{I}_{A_n} \cdot d(x,y) = \tilde{I}_{A_n} \cdot d(x_n,y_n)$ for each $n \in N$, so $d(x,y) = (\sum^{\infty}_{n=1}\tilde{I}_{A_n}) \cdot d(x,y) = \sum^{\infty}_{n=1}\tilde{I}_{A_n} \cdot d(x,y) = \sum^{\infty}_{n=1}\tilde{I}_{A_n} \cdot d(x_n,y_n)$.

(4). Proof is similar to that of (3) and more easy, so is omitted.
\end{proof}

The following idea of introducing $(\varepsilon,\lambda)$--uniformity for an $RM$ space is due to B. Schweizer and A. Sklar \cite{SS8305} and that of introducing $L^0$--uniformity is due to D. Filipovi\'{c}, et.al \cite{FKV09}.

\begin{definition}\label{definition2.11} \cite{GY12}
Let $(E,d)$ be an $RM$ space with base $(\Omega,\mathcal{F},P)$. For any positive numbers $\varepsilon > 0$ and $0 < \lambda <1$, let $U(\varepsilon,\lambda) = \{ (x,y) \in E \times E : P\{\omega \in \Omega : d(x,y)(\omega) < \varepsilon \} > 1-\lambda \}$, then $\mathcal{U} = \{ U(\varepsilon,\lambda) : \varepsilon >0, ~0 <\lambda <1 \}$ forms a base for some Hausdorff uniformity on $E$, called the $(\varepsilon,\lambda)$--uniformity induced by $d$, the corresponding topology is called the $(\varepsilon,\lambda)$--topology, denoted by $\mathcal{T}_{\varepsilon,\lambda}$. For any $\varepsilon \in L^0_{++}(\mathcal{F})$, let $U(\varepsilon) = \{ (x,y) \in E \times E : d(x,y) \leq \varepsilon \}$, then $\mathcal{U}_{L^0} = \{ U(\varepsilon) : \varepsilon \in L^0_{++}(\mathcal{F}) \}$ forms a base for some Hausdorff uniformity on $E$, called the $L^0$--uniformity induced by $d$, the corresponding topology is called the $L^0$--topology, denoted by $\mathcal{T}_c$. $(E,d)$ is said to be $(\varepsilon,\lambda)$--complete and $L^0$--complete if it is complete with respect to the $(\varepsilon,\lambda)$--uniformity and $L^0$--uniformity, respectively.

Similarly, an $RM$ space $(E,d)$ always has an $(\varepsilon,\lambda)$--completion and an $L^0$--completion with respect to the two kinds of uniformities stated above, denoted by $(\tilde{E}_{\varepsilon,\lambda},d)$ and $(\tilde{E}_{c},d)$, respectively, which are both unique in the sense of isometric isomorphism with respect to random metric.
\end{definition}

We would also like to point out that the $(\varepsilon,\lambda)$--uniformity is always metrizable but the $L^0$--uniformity is not necessarily metrizable. On the other hand, the $L^0$--uniformity is generally much stronger than $(\varepsilon,\lambda)$--uniformity, so an $RM$ space must be $L^0$--complete if it is $(\varepsilon,\lambda)$--complete, we will prove that the two kinds of completeness coincide when the $RM$ space is $d$--$\sigma$--stable, see Theorem \ref{theorem2.13} below. To prove Theorem \ref{theorem2.13}, let us first establish a result similar to \cite[Theorem3.12]{Guo10}.

\begin{theorem}\label{theorem2.12}
Let $(E,d)$ be an $RM$ space with base $(\Omega,\mathcal{F},P)$ and $G$ a nonempty subset of $E$. Then we have the following statements:
\begin{enumerate}[(1)]
\item $d(x,G) = d(x,G^{-}_{\varepsilon,\lambda}) = d(x,G^{-}_{c})$ for any $x \in E$, where $G^{-}_{\varepsilon,\lambda}$ and $G^{-}_{c}$ stand for the closures of $G$ with respect to $\mathcal{T}_{\varepsilon,\lambda}$ and $\mathcal{T}_{c}$, respectively, and $d(x,H) = \bigwedge \{ d(x,h) : h \in H \}$ for any nonempty subset $H$ of $E$.
\item If $G$ is $d$--stable, then $d(x,G)= 0$ iff $x \in G^{-}_{\varepsilon,\lambda}$.
\item If $G$ is $d$--$\sigma$--stable, then $d(x,G)= 0$ iff $x \in G^{-}_{c}$, in particular at this time $G^{-}_{\varepsilon,\lambda} = G^{-}_{c}$.
\end{enumerate}
\end{theorem}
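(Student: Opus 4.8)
The plan is to prove the three parts in order, using part (1) as the workhorse for the easy directions of (2) and (3), and reserving $\sigma$--stability for the decisive direction of (3). For part (1), I would first record the chain of inclusions $G \subseteq G^{-}_{c} \subseteq G^{-}_{\varepsilon,\lambda}$, the second inclusion coming from the fact that the $L^0$--uniformity is stronger than the $(\varepsilon,\lambda)$--uniformity, so $\mathcal{T}_c$ is finer and produces the smaller closure. Since taking the infimum over a larger set can only decrease it, this immediately gives $d(x,G^{-}_{\varepsilon,\lambda}) \le d(x,G^{-}_{c}) \le d(x,G)$. The content is the reverse inequality $d(x,G) \le d(x,G^{-}_{\varepsilon,\lambda})$, for which it suffices to show $d(x,G) \le d(x,y)$ for every $y \in G^{-}_{\varepsilon,\lambda}$. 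Here I would use that the $(\varepsilon,\lambda)$--uniformity is metrizable: choose a sequence $\{g_n\}$ in $G$ with $g_n \to y$ in $\mathcal{T}_{\varepsilon,\lambda}$, pass to a subsequence along which $d(g_n,y) \to 0$ a.s., and apply $(RM-4)$ in both directions to obtain $|d(x,g_n)-d(x,y)| \le d(g_n,y)$, whence $d(x,g_n) \to d(x,y)$ a.s.; since $d(x,G) \le d(x,g_n)$ for every $n$, letting $n \to \infty$ yields $d(x,G) \le d(x,y)$. Taking the infimum over $y$ closes the argument.

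For part (2), the direction $x \in G^{-}_{\varepsilon,\lambda} \Rightarrow d(x,G)=0$ is immediate from part (1): $d(x,G) = d(x,G^{-}_{\varepsilon,\lambda}) \le d(x,x) = 0$. For the converse I would invoke the stability structure. By Theorem \ref{theorem2.10}(2), $d$--stability makes $L := \{ d(x,g) : g \in G \}$ directed downwards, so by Proposition \ref{proposition2.1}(2) there is a nonincreasing sequence in $L$, say $d(x,g_n)$ with $g_n \in G$, whose infimum equals $\bigwedge L = d(x,G) = 0$. A nonincreasing sequence in $L^0_+(\mathcal{F})$ with infimum $0$ decreases to $0$ a.s., hence converges to $0$ in probability, which is exactly $(\varepsilon,\lambda)$--convergence of $g_n$ to $x$; therefore $x \in G^{-}_{\varepsilon,\lambda}$.

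Part (3) is where the real work lies, and the easy direction again follows from part (1). For the converse, assume $d(x,G)=0$. Exactly as in (2), now using Theorem \ref{theorem2.10}(4) and Proposition \ref{proposition2.1}(2), I obtain a sequence $\{g_n\}$ in $G$ with $d(x,g_n) \downarrow 0$ a.s. To show $x \in G^{-}_{c}$ I must meet every basic $\mathcal{T}_c$--neighborhood of $x$, that is, produce for each $\varepsilon \in L^0_{++}(\mathcal{F})$ an element $g \in G$ with $d(x,g) \le \varepsilon$. This is the step that genuinely requires $\sigma$--stability rather than mere stability, since $\varepsilon$ is random and must be dominated simultaneously on all of $\Omega$. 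I would set $A_n = \{ d(x,g_n) \le \varepsilon \} \setminus \bigcup_{k<n} \{ d(x,g_k) \le \varepsilon \}$; since $d(x,g_n) \downarrow 0$ while $\varepsilon > 0$ a.s., the $A_n$ form a countable partition of $\Omega$ mod a null set. By $d$--$\sigma$--stability the concatenation $g = \sum_{n} \tilde{I}_{A_n} \cdot g_n$ exists in $G$, and $\tilde{I}_{A_n} \cdot d(g,g_n) = 0$ together with $(RM-4)$ gives $d(x,g) = d(x,g_n) \le \varepsilon$ on each $A_n$, hence $d(x,g) \le \varepsilon$ a.s. Thus $x \in G^{-}_{c}$. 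Finally, combining the two implications with part (1) (or with part (2)) yields $G^{-}_{\varepsilon,\lambda} \subseteq G^{-}_{c}$, and since the reverse inclusion always holds, $G^{-}_{\varepsilon,\lambda} = G^{-}_{c}$. The main obstacle, and the only place where $\sigma$--stability is indispensable, is precisely this gluing construction controlling an $L^0_{++}(\mathcal{F})$--valued $\varepsilon$ uniformly across $\Omega$.
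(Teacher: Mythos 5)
Your proposal is correct and follows essentially the same route as the paper's own proof: part (2) via directedness of $\{d(x,g):g\in G\}$ plus Proposition \ref{proposition2.1}(2) and the fact that a.s.\ convergence implies convergence in probability, and part (3) via exactly the same countable-partition gluing $A_n=B_n\setminus B_{n-1}$, $g=\sum_n\tilde{I}_{A_n}\cdot g_n$ with $d(x,g)\leq\varepsilon$. The only differences are cosmetic: you spell out part (1), which the paper dismisses as obvious, and your citations of Theorem \ref{theorem2.10}(2) and (4) are if anything more apt than the paper's.
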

\begin{proof}
\begin{enumerate}[(1)]
\item It is obvious, so is omitted.
\item Sufficiency is obvious by (1). For the proof of necessity, suppose $d(x,G)=0$. Since $\{ d(x,g) : g \in G \}$ is directed downwards by (1) of Theorem \ref{theorem2.10}, there exists a sequence $\{ g_n : n \in N \}$ in $G$ by (2) of Proposition \ref{proposition2.1} such that $\{ d(x,g_n) : n \in N \}$ converges a.s. to 0 in a nonincreasing way, and hence also converges in probability to 0, namely $\{ g_n : n \in N \}$ converges in $\mathcal{T}_{\varepsilon,\lambda}$ to $x$, that is to say, $x \in G^{-}_{\varepsilon,\lambda}$.
\item Sufficiency is obvious by (1). For the proof of necessity, let us first notice that $d$--$\sigma$--stable of $G$ also implies its $d$--stability, then there exists a sequence $\{ g_n : n \in N \}$ in $G$ as in the proof of (2) such that $\{ d(x,g_n) : n \in N \}$ converges a.s. to 0 in a nonincreasing way. Now, we prove $x \in G^{-}_{c}$ as follows. For a given $\varepsilon \in L^0_{++}(\mathcal{F})$, let $\varepsilon^0$ be an arbitrarily chosen representative of $\varepsilon$ and $d^0(x,g_n)$ that of $d(x,g_n)$ for each $n$, we can, without loss of generality, assume that $\bigcup_{n \geq 1}B_n = \Omega$ and $B_n \subset B_{n+1}$ for each $n \in N$, where $B_n = \{ \omega \in \Omega : d^0(x,g_n)(\omega) \leq \varepsilon^0(\omega) \}$ for each $n \in N$. Let $A_n = B_n \setminus B_{n-1}$ for each $n \geq 1$ (with $B_0 = \emptyset$ ) and $g = \sum_{n \geq 1} \tilde{I}_{A_n} \cdot g_n$, then it is easy to check that $d(x,g) = \sum_{n \geq 1} \tilde{I}_{A_n} \cdot d(x,g_n) \leq \varepsilon$, which means that $x \in G^{-}_c$.
\end{enumerate}
\end{proof}
\begin{theorem}\label{theorem2.13}
Let $(E,d)$ be a $d$--$\sigma$--stable $RM$ space. Then $E$ is $(\varepsilon,\lambda)$--complete iff $E$ is $L^0$--complete.
\end{theorem}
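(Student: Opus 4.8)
The plan is to prove the two implications separately and, for the nontrivial one, to reduce everything to the coincidence of the $\mathcal{T}_{\varepsilon,\lambda}$-- and $\mathcal{T}_c$--closures already recorded in Theorem \ref{theorem2.12}(3). The forward implication, that $(\varepsilon,\lambda)$--completeness forces $L^0$--completeness, is exactly the observation made just before the statement, so I would simply cite it; for completeness of exposition its reason is that, since $\mathcal{U}_{L^0}$ is finer than $\mathcal{U}$, every $L^0$--Cauchy net is $(\varepsilon,\lambda)$--Cauchy and hence $\mathcal{T}_{\varepsilon,\lambda}$--convergent to some $x$. Because $d(x_\alpha,\cdot)$ is $\mathcal{T}_{\varepsilon,\lambda}$--continuous and each set $\{\xi\in L^0_+(\mathcal{F}):\xi\le\varepsilon\}$ is stable under convergence in probability (pass to an a.s.\ convergent subsequence), the bound $d(x_\alpha,x_\beta)\le\varepsilon$ passes to the limit in $\beta$ to give $d(x_\alpha,x)\le\varepsilon$, i.e.\ $\mathcal{T}_c$--convergence. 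This direction uses no $d$--$\sigma$--stability.

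The substance is the converse, so assume $E$ is $L^0$--complete. First I would pass to the $(\varepsilon,\lambda)$--completion $(\tilde{E}_{\varepsilon,\lambda},d)$ of Definition \ref{definition2.11}, inside which $E$ sits as a $\mathcal{T}_{\varepsilon,\lambda}$--dense subset, so that $E^{-}_{\varepsilon,\lambda}=\tilde{E}_{\varepsilon,\lambda}$ (all closures now taken in $\tilde{E}_{\varepsilon,\lambda}$). The concatenations witnessing $d$--$\sigma$--stability of $E$ are computed from the random metric alone, and therefore still lie in $E$ when $E$ is regarded as a subset of $\tilde{E}_{\varepsilon,\lambda}$; hence $E$ is a $d$--$\sigma$--stable subset of the $RM$ space $\tilde{E}_{\varepsilon,\lambda}$. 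Applying Theorem \ref{theorem2.12}(3) to this subset yields $E^{-}_{\varepsilon,\lambda}=E^{-}_{c}$, so that $E$ is in fact $\mathcal{T}_c$--dense in $\tilde{E}_{\varepsilon,\lambda}$, namely $E^{-}_{c}=\tilde{E}_{\varepsilon,\lambda}$.

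To finish I would invoke the $L^0$--completeness of $E$. Since the $L^0$--uniformity is defined purely through $d$, and $d$ extends from $E$ to $\tilde{E}_{\varepsilon,\lambda}$, the $L^0$--uniformity $\mathcal{U}_{L^0}$ of $\tilde{E}_{\varepsilon,\lambda}$ restricts on $E$ to the intrinsic $L^0$--uniformity of $E$; thus $E$ is a complete subspace of the Hausdorff uniform space $(\tilde{E}_{\varepsilon,\lambda},\mathcal{U}_{L^0})$. A complete subspace of a Hausdorff uniform space is closed, so $E=E^{-}_{c}=\tilde{E}_{\varepsilon,\lambda}$. Hence $E$ coincides with its $(\varepsilon,\lambda)$--completion, i.e.\ $E$ is $(\varepsilon,\lambda)$--complete, which is the desired converse.

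The main obstacle, and the only place where $d$--$\sigma$--stability is essential, is the density upgrade $E^{-}_{\varepsilon,\lambda}=E^{-}_{c}$. Because $\mathcal{T}_c$ is strictly finer than $\mathcal{T}_{\varepsilon,\lambda}$ and is not metrizable, one cannot argue by extracting an a.s.\ convergent subsequence and declaring it $\mathcal{T}_c$--convergent: a merely uniform (deterministic) rate of convergence is not enough to enter the closed $L^0$--balls $\{\,\cdot\le\varepsilon\,\}$ for every $\varepsilon\in L^0_{++}(\mathcal{F})$. Theorem \ref{theorem2.12}(3) is precisely the device that converts $\mathcal{T}_{\varepsilon,\lambda}$--denseness into $\mathcal{T}_c$--denseness by gluing a suitably chosen subsequence along a countable partition of $\Omega$, and it is there that the countable concatenation property does the real work; once this coincidence of closures is in hand, the remainder is the soft topological fact that completeness of a subspace forces closedness.
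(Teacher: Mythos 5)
Your proof is correct and takes essentially the same route as the paper's: the paper also handles sufficiency by passing to the $(\varepsilon,\lambda)$--completion and concluding $\tilde{E}_{\varepsilon,\lambda}=E^{-}_{\varepsilon,\lambda}=E^{-}_{c}=E$ via $(3)$ of Theorem \ref{theorem2.12}. Your write-up simply makes explicit the two details the paper leaves implicit --- why $(\varepsilon,\lambda)$--completeness already gives $L^0$--completeness (closedness of the sets $\{\xi\leq\varepsilon\}$ under convergence in probability), and why $L^0$--completeness of $E$ forces $E^{-}_{c}=E$ (a complete subspace of a Hausdorff uniform space is closed).
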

\begin{proof}
Necessity is obvious since the $(\varepsilon,\lambda)$--uniformity is weaker than the $L^0$--uniformity. For sufficiency, let $\tilde{E}_{\varepsilon,\lambda}$ be the $(\varepsilon,\lambda)$--completion, then $\tilde{E}_{\varepsilon,\lambda}= E^{-}_{\varepsilon,\lambda} = E^{-}_c = E$ by (3) of Theorem \ref{theorem2.12}.
\end{proof}
\begin{remark}\label{remark2.14}
For an $RN$ module or ( more generally ) a random locally convex module, its $(\varepsilon,\lambda)$--completeness already implies its $\sigma$--stability, so Theorem 3.18 of \cite{Guo10} can also be stated in the way: a random locally convex module is $\mathcal{T}_{\varepsilon,\lambda}$--complete iff it is $\mathcal{T}_{c}$--complete and has $\sigma$--stability. But an $(\varepsilon,\lambda)$--complete $RM$ space is not necessarily $d$--$\sigma$--stable, so Theorem \ref{theorem2.13} does not possess such a tidy statement! But we have the following interesting Theorem \ref{theorem2.15}.
\end{remark}
\begin{theorem}\label{theorem2.15}
Let $(E,d)$ be an $(\varepsilon,\lambda)$--complete $RM$ space with base $(\Omega,\mathcal{F},P)$. Then $E$ is $d$--stable iff $E$ is $d$--$\sigma$--stable.
\end{theorem}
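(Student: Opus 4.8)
The plan is to prove the two implications separately, with essentially all the work concentrated in the forward direction. The backward implication is immediate and uses neither completeness nor the triangle inequality: if $E$ is $d$--$\sigma$--stable, then given $A \in \mathcal{F}$ and $x_1,x_2 \in E$ I simply apply Definition \ref{definition2.6} to the countable partition $\{A, A^c, \emptyset, \emptyset, \dots\}$ together with the sequence $x_1, x_2, x_2, \dots$; the resulting $x$ satisfies $\tilde{I}_A \cdot d(x,x_1)=0$ and $\tilde{I}_{A^c}\cdot d(x,x_2)=0$, which is exactly $d$--stability.

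For the forward direction I would assume $E$ is $d$--stable and $(\varepsilon,\lambda)$--complete, fix a sequence $\{x_n : n \in N\}$ in $E$ and a countable partition $\{A_n : n \in N\}$ of $\Omega$, and set $B_n = \bigcup_{i=1}^n A_i$. Since $d$--stability yields all \emph{finite} concatenations by (2) of Theorem \ref{theorem2.7}, I can build, for each $n$, the element $y_n \in E$ associated with the finite partition $\{A_1,\dots,A_n,B_n^c\}$ and the values $x_1,\dots,x_n,x_{n+1}$; that is, $\tilde{I}_{A_i}\cdot d(y_n,x_i)=0$ for $i \leq n$ and $\tilde{I}_{B_n^c}\cdot d(y_n,x_{n+1})=0$. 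The guiding idea is that these finite concatenations should be $(\varepsilon,\lambda)$--Cauchy, so that their limit will serve as the desired countable concatenation.

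The key step is the Cauchy estimate. For $m \geq n$, the elements $y_n$ and $y_m$ agree (in the sense of vanishing distance) with $x_i$ on each $A_i$ with $i \leq n$, so $\tilde{I}_{B_n}\cdot d(y_n,y_m)=0$ and hence $d(y_n,y_m)=\tilde{I}_{B_n^c}\cdot d(y_n,y_m)$ is carried by $B_n^c$. Because $\{A_n\}$ partitions $\Omega$, the probabilities $P(B_n^c)=\sum_{i>n}P(A_i)$ are nonincreasing and tend to $0$. Consequently $\{d(y_n,y_m) \geq \varepsilon\} \subseteq B_n^c$, giving $P\{d(y_n,y_m)\geq\varepsilon\}\leq P(B_n^c)$; choosing $N$ with $P(B_N^c)<\lambda$ then places $(y_n,y_m)\in U(\varepsilon,\lambda)$ for all $m \geq n \geq N$ in the notation of Definition \ref{definition2.11}. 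Thus $\{y_n\}$ is $(\varepsilon,\lambda)$--Cauchy, and $(\varepsilon,\lambda)$--completeness furnishes a limit $x \in E$ with $y_n \to x$ in $\mathcal{T}_{\varepsilon,\lambda}$, i.e. $d(x,y_n)\to 0$ in probability. To finish, I fix $k$ and note that for every $n \geq k$ the triangle inequality $(RM$--$4)$ gives $\tilde{I}_{A_k}\cdot d(x,x_k) \leq \tilde{I}_{A_k}\cdot d(x,y_n) + \tilde{I}_{A_k}\cdot d(y_n,x_k) = \tilde{I}_{A_k}\cdot d(x,y_n)$; the right-hand side tends to $0$ in probability while the left-hand side is fixed and nonnegative, forcing $\tilde{I}_{A_k}\cdot d(x,x_k)=0$. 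Since $k$ is arbitrary, $x$ is the required element and $E$ is $d$--$\sigma$--stable.

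The main obstacle is precisely the Cauchy estimate, and it is the only place where the hypotheses are genuinely exploited: $d$--stability (through Theorem \ref{theorem2.7}(2)) is what allows the finite concatenations $y_n$ to exist, while the observation that consecutive concatenations differ only on the shrinking sets $B_n^c$ is exactly what makes them Cauchy for the $(\varepsilon,\lambda)$--uniformity but not obviously for the stronger $L^0$--uniformity. This explains why $(\varepsilon,\lambda)$--completeness is the right completeness hypothesis here, and I would expect the passage to the limit (dominating a fixed nonnegative element by a null-in-probability sequence) to be routine once the Cauchy property is in hand.
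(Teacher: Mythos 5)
Your proof is correct and follows essentially the same route as the paper's: build the finite concatenations $y_n$ via Theorem \ref{theorem2.7}(2), observe they are $(\varepsilon,\lambda)$--Cauchy because $d(y_n,y_m)$ is supported on $B_n^c$ with $P(B_n^c)\to 0$, and identify the $(\varepsilon,\lambda)$--limit as the countable concatenation. The only (immaterial) differences are that the paper anchors the tail set $B_n^c$ at a fixed $y_0$ rather than at $x_{n+1}$, and it passes to the limit via $d(g_n,g_{n+k})\to d(g_n,x)$ in probability instead of your direct domination argument $\tilde{I}_{A_k}\cdot d(x,x_k)\leq \tilde{I}_{A_k}\cdot d(x,y_n)$.
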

\begin{proof}
We only need to prove necessity since sufficiency is obvious. For this, let $\{ x_n : n \in N \}$ be a sequence in $E$ and $\{ A_n : n \in N \}$ a countable partition of $\Omega$ to $\mathcal{F}$, and further fix an element $y_0 \in E$, then there exists unique one $g_n \in E$ for each $n \in N$ such that $g_n = \sum^n_{i=1} \tilde{I}_{A_i} \cdot x_i + \tilde{I}_{(\cup^n_{i=1}A_i)^c} \cdot y_0$, namely we have the following:
\begin{enumerate}[(1)]
\item $\tilde{I}_{A_i} \cdot d(g_n,x_i)=0$ for each $i \in \{ 1,2,\cdots n \}$.
\item $\tilde{I}_{(\cup^n_{i=1}A_i)^c} \cdot d(g_n,y_0)=0$.
\end{enumerate}

By the triangle inequality and (1) one has $\tilde{I}_{A_i} \cdot d(g_n,g_{n+k})=0$ for each $i \in \{ 1,2,\cdots n \}$ and each $k \in N$, then $P\{ \omega \in \Omega : d(g_n,g_{n+k})(\omega) > \varepsilon \} \leq P (\cup^\infty_{i=n+1} A_i) = \sum^\infty_{i=n+1} P(A_i)$ for each positive number $\varepsilon$, which means $\{ g_n : n \in N \}$ is an $(\varepsilon,\lambda)$--Cauchy sequence in $E$. By the $(\varepsilon,\lambda)$--completeness of $E$, there exists unique one $x \in E$ such that $\{ d(g_n,x) : n \in N \}$ converges in probability to 0. Since, for each given $n \in N$, $\{ d(g_n,g_{n+k}) : k \in N \}$ converges in probability to $d(g_n,x)$, $\tilde{I}_{A_i} \cdot d(g_n,x)=0$ for each $i \in \{ 1,2,\cdots n \}$. Again by (1) and the triangle inequality one has $\tilde{I}_{A_i} \cdot d(x,x_i) =0$ for each $n \in N$ and each $i \in \{ 1,2,\cdots n \}$, namely $\tilde{I}_{A_i} \cdot d(x,x_i) =0$ for each $i \in N$.
\end{proof}

The following two examples shows that the $(\varepsilon,\lambda)$--completeness and $d$--$\sigma$--stability for an $RM$ space do not imply each other.
\begin{example}\label{example2.16}
Let $\Omega =[0,1]$, $\mathcal{F}=$ the $\sigma$--algebra of Lebesgue measurable subsets of $[0,1]$ and $P=$ the Lebesgue measure on $\mathcal{F}$. Since $L^0(\mathcal{F})$ is a $\mathcal{T}_{\varepsilon,\lambda}$--complete $RN$ module, let $E=$ the set of equivalence classes of Lebesgue measurable functions ( on $[0,1]$ ) taking countably many values in $R$, then $E$ is obviously a $\sigma$--stable subset of $L^0(\mathcal{F})$, namely $(E,d)$ is $d$--$\sigma$--stable under the random metric $d$ defined by $d(\xi,\eta) = |\xi -\eta|$ for any $\xi$ and $\eta \in E$, but $(E,d)$ is not $(\varepsilon,\lambda)$--complete since $E$ is a denes subset of $L^0(\mathcal{F})$.
\end{example}
\begin{example}
Let $(\Omega,\mathcal{F},P)$ be the same as in Example \ref{example2.16}, $I$ the equivalence class of the identity function on $[0,1]$ and $E = \{ \alpha \cdot I : \alpha \in R \}$. Then $E$ is an $(\varepsilon,\lambda)$--complete $RM$ space as a subspace of $L^0(\mathcal{F})$, but it is not difficult to check $(E,d)$ is not $d$--$\sigma$--stable, where $d(\xi,\eta) = |\xi -\eta|$ for any $\xi,\eta \in E$.
\end{example}

\section{Applications of $d$--$\sigma$--stability}\label{section3}
\par
Just as in the classical case of metric spaces, let $(E_1,d_1)$ and $(E_2,d_2)$ be two $RM$ spaces with base $(\Omega,\mathcal{F},P)$, then $(E_1\times E_2,d)$ is still an $RM$ space with base $(\Omega,\mathcal{F},P)$, where $d((x_1,y_1),(x_2,y_2))=d_1(x_1,x_2)+d_2(y_1,y_2)$ for any $(x_1,y_1),(x_2,y_2)\in E_1\times E_2$. It is also easy to see that the $(\varepsilon,\lambda)$--topology and $L^0$--topology on $ E_1\times E_2$ induced by $d$ are just the product topologies $\mathcal{T}^1_{\varepsilon,\lambda}\times \mathcal{T}^2_{\varepsilon,\lambda}$ and $\mathcal{T}^1_c\times \mathcal{T}^2_c$, respectively, where $\mathcal{T}^i_{\varepsilon,\lambda}$ and $\mathcal{T}^i_c$ are the $(\varepsilon,\lambda)$--topology and $L^0$--topology on $E^i$, respectively, $i=1,2$. In this section, we only involves the product of an $RM$ space $(E,d)$ with base $(\Omega,\mathcal{F},P)$ and $L^0(\mathcal{F})$, $L^0(\mathcal{F})$ is an $RN$ module and, of course, an $RM$ space with base $(\Omega,\mathcal{F},P)$ under the random metric $d_2$ defined by $d_2(\xi,\eta)=|\xi-\eta|$ for any $\xi$ and $\eta$ in $L^0(\mathcal{F})$, it is obviously $\sigma$--stable.
\par
Lemma \ref{lemma3.1} below was first obtained in \cite{GZWG18} as a direct corollary of Theorem 3.12 of \cite{Guo10}, which will play a key role in several spaces of this section.
\begin{lemma}\label{lemma3.1}\cite{GZWG18}
Let $G$ be a $\sigma$--stable subset of $L^0(\mathcal{F})$ such that $G$ has an upper bound $($or a lower bound$)$ in $L^0(\mathcal{F})$. Then for any $\varepsilon\in L^0_{++}(\mathcal{F})$ there exists some $g\in G$ such that $g>\bigvee G-\varepsilon$ on $\Omega$ $($accordingly, $g<\bigwedge G+\varepsilon$ on $\Omega)$.
\end{lemma}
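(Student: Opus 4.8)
The plan is to prove the statement for the supremum; the infimum case then follows by applying it to $\{ -g : g \in G \}$ (or symmetrically, interchanging $\bigvee$ with $\bigwedge$ and ``nondecreasing'' with ``nonincreasing''). Throughout write $s := \bigvee G$, which exists and lies in $L^0(\mathcal{F})$ because $L^0(\mathcal{F})$ is Dedekind complete and $G$ is bounded above.

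First I would record that a $\sigma$--stable subset $G$ of $L^0(\mathcal{F})$ is closed under finite suprema: given $\xi,\eta\in G$, put $A=\{\omega:\xi^0(\omega)\geq\eta^0(\omega)\}$ and apply $\sigma$--stability to the partition $\{A,A^c,\emptyset,\emptyset,\dots\}$ to obtain $\xi\vee\eta=\tilde{I}_A\cdot\xi+\tilde{I}_{A^c}\cdot\eta\in G$. Hence $G$ is directed upwards, and Proposition \ref{proposition2.1} supplies a nondecreasing sequence $\{a_n:n\in N\}$ in $G$ with $\bigvee_{n\geq1}a_n=s$; equivalently $a_n\uparrow s$ a.s.

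Next, fix $\varepsilon\in L^0_{++}(\mathcal{F})$ and choose representatives $a_n^0$, $s^0$, $\varepsilon^0$. Set $B_n=\{\omega:a_n^0(\omega)>s^0(\omega)-\varepsilon^0(\omega)\}$. Because the $a_n^0$ increase to $s^0$ a.s. and $\varepsilon^0>0$ a.s., the sets $B_n$ increase with $\bigcup_{n\geq1}B_n=\Omega$ up to a null set. Forming the countable partition $C_1=B_1$, $C_n=B_n\setminus B_{n-1}$ ($n\geq2$), $\sigma$--stability yields $g:=\sum_{n\geq1}\tilde{I}_{C_n}\cdot a_n\in G$. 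Since $C_n\subseteq B_n$ and $g=a_n$ a.s. on $C_n$, we obtain $g>s-\varepsilon$ a.s. on each $C_n$, and therefore $g>s-\varepsilon$ on $\Omega$, as required.

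The only genuinely delicate point is the passage from the pointwise, $\omega$--dependent near-attainment of $s$ along the monotone sequence to a single element $g\in G$ realizing $g>s-\varepsilon$ simultaneously on all of $\Omega$; this is exactly where $\sigma$--stability is indispensable, since an arbitrary directed set would only give $a_n(\omega)$ close to $s(\omega)$ for $n$ depending on $\omega$. One must also take care that the inequality is strict everywhere, which forces the hypothesis $\varepsilon\in L^0_{++}(\mathcal{F})$ rather than merely $\varepsilon\in L^0_{+}(\mathcal{F})$: strict positivity of $\varepsilon$ on all of $\Omega$ is what guarantees $\bigcup_{n\geq1}B_n=\Omega$ and hence the strict bound on $\Omega$. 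A shorter alternative is to observe that $d(s,G)=\bigwedge_{g\in G}(s-g)=s-\bigvee G=0$ and invoke part (3) of Theorem \ref{theorem2.12} to place $s$ in the $\mathcal{T}_c$--closure of $G$, then unwind the definition of the $L^0$--topology and replace $\varepsilon$ by $\varepsilon/2$ to recover the strict inequality; this is essentially the route via Theorem 3.12 of \cite{Guo10} used in \cite{GZWG18}.
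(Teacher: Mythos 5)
Your proposal is correct, and it is worth noting how it sits relative to the paper: the paper gives no proof of Lemma \ref{lemma3.1} at all, quoting it from \cite{GZWG18} with the remark that it is a direct corollary of Theorem 3.12 of \cite{Guo10} --- which is precisely the closure result reproduced as Theorem \ref{theorem2.12}(3) here, and precisely the ``shorter alternative'' you sketch in your last sentences (one only needs $d(\bigvee G,G)=\bigwedge\{\bigvee G-g: g\in G\}=\bigvee G-\bigvee G=0$, so $\bigvee G\in G^-_c$, and then the $\varepsilon/2$ trick upgrades $|\bigvee G-g|\leq\varepsilon/2$ to the strict inequality on $\Omega$). Your main argument is instead a self-contained unwinding of that machinery: closure of $G$ under binary suprema via $\sigma$--stability (hence upward directedness), a nondecreasing sequence $a_n\uparrow\bigvee G$ from Proposition \ref{proposition2.1}(2), and the concatenation $g=\sum_{n\geq 1}\tilde{I}_{C_n}\cdot a_n$ along the partition determined by the first index at which $a_n$ exceeds $\bigvee G-\varepsilon$. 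This is exactly the technique by which the paper proves Theorem \ref{theorem2.12}(2)--(3), so in effect you re-prove the cited closure theorem in the special case $E=L^0(\mathcal{F})$, $x=\bigvee G$; what this buys is independence from \cite{Guo10}, at the cost of length, while the paper's citation route buys brevity. Two cosmetic repairs you should make explicit: the sets $C_n$ partition $\Omega$ only up to the null set $\Omega\setminus\bigcup_{n\geq 1}B_n$, which must be absorbed into, say, $C_1$ (harmless, since equivalence classes of indicators ignore null sets, but needed so that $\{C_n: n\in N\}$ is literally a countable partition of $\Omega$ to $\mathcal{F}$ as Definition \ref{definition2.5} requires); and to have the $B_n$ genuinely increasing you should first fix representatives $a^0_n$ that are pointwise nondecreasing with $s^0=\sup_n a^0_n$, which is possible after modification on a single null set.
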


\begin{definition}\label{definition3.2}\cite{GY12}
Let $(E,d)$ be an $RM$ space with base $(\Omega,\mathcal{F},P)$ and $f$ a mapping from $E$ to $\bar{L}^0(\mathcal{F})$. $f$ is said to be proper if $f(x)>-\infty$ on $\Omega$ for any $x\in E$ and $dom(f):=\{x\in E: f(x)<+\infty$ on $\Omega\}\neq\emptyset$; further, a proper $f: E\rightarrow \bar{L}^0(\mathcal{F})$ is said to be $\mathcal{T}_{\varepsilon,\lambda}$--lower semicontinuous $($or $\mathcal{T}_c$--lower semicontinuous$)$ if $epi(f):=\{(x,r)\in E\times L^0(\mathcal{F}) : f(x)\leq r\}$ is $\mathcal{T}_{\varepsilon,\lambda}$--closed in $E\times L^0(\mathcal{F})$ $($respectively, $\mathcal{T}_c$--closed in $E\times L^0(\mathcal{F}))$; finally, if $E$ is $d$--$\sigma$--stable, $f$ is said to be $\sigma$--stable if $f(\sum_{n=1}^{\infty}\tilde{I}_{A_n}\cdot x_n)=\sum_{n=1}^{\infty}\tilde{I}_{A_n}\cdot f(x_n)$ for each sequence $\{x_n: n\in N\}$ in $E$ and each countable partition of $\Omega$ to $\mathcal{F}$. Similarly, if $E$ is $d$--stable, $f$ is said to be stable if, for any $A\in \mathcal{F}$ and any $x_1,x_2\in E$, $f(\tilde{I}_A\cdot x_1+\tilde{I}_{A^c}\cdot x_2)=\tilde{I}_A\cdot f(x_1)+ \tilde{I}_{A^c}\cdot f(x_2)$.
\end{definition}

\begin{remark}\label{remark3.3}
Similarly to the proof of $(2)$ of Theorem \ref{theorem2.7}, one can prove that $f: E \rightarrow \bar{L}^0(\mathcal{F})$ is stable iff $f(\sum^n_{i=1}\tilde{I}_{A_i}\cdot x_i)=\sum^n_{i=1}\tilde{I}_{A_i}\cdot f(x_i)$ for each $n\in N$, each finite subset $\{x_1,x_2,\cdots x_n\}$ in $E$ and each finite partition $\{A_1,A_2,\cdots A_n\}$ of $\Omega$ to $\mathcal{F}$.
\end{remark}

Local functions defined on an $L^0(\mathcal{F})$--module often occurs in the study of financial problems, see, e.g.\cite{FKV09,FM14a,FM14b,GZWYYZ17,GZZ14}. Let us recall: a function $f$ from an $L^0(\mathcal{F})$--module $E$ to $\bar{L}^0(\mathcal{F})$ is said to be local if $\tilde{I}_A\cdot f(\tilde{I}_A\cdot x)=\tilde{I}_A\cdot f(x)$ for any $A\in \mathcal{F}$ and any $x\in E$. Obviously, when $(E,\|\cdot\|)$ is an $RN$ module, $f$ is stable iff $f$ is local $($ notice: an $RN$ module $E$ is always stable $)$, further when $E$ is a $\sigma$--stable $RN$ module, it is easy to see that $f$ is $\sigma$--stable iff $f$ is local. Thus Theorem \ref{theorem3.4} below is a good generalization of Theorem 3.5 of \cite{GY12}.

\begin{theorem}\label{theorem3.4}
Let $(E,d)$ be an $RM$ space with base $(\Omega,\mathcal{F},P)$, $G$ a $d$--$\sigma$--stable subset of $E$ and $f: G\rightarrow \bar{L}^0(\mathcal{F})$ proper, $\sigma$--stable and bounded from below, namely there exists some $\eta\in L^0(\mathcal{F})$ such that $f(x)\geq\eta$ for any $x\in G$, then for each $\varepsilon\in L^0_{++}(\mathcal{F})$ there exists some $x_{\varepsilon}\in G$ such that $f(x_{\varepsilon}) \leq \bigwedge f(G)+\varepsilon$. Similarly, if $f: G\rightarrow \bar{L}^0(\mathcal{F})$ is proper $\sigma$--stable and bounded from above, then for each $\varepsilon\in L^0_{++}(\mathcal{F})$ there exists some $x_{\varepsilon}\in G$ such that $f(x_{\varepsilon}) \geq \bigvee f(G)-\varepsilon$.
\end{theorem}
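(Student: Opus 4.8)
The plan is to push the problem forward through $f$ into $\bar{L}^0(\mathcal{F})$ and then invoke Lemma \ref{lemma3.1}, which already furnishes an almost-minimizer of a $\sigma$--stable subset of $L^0(\mathcal{F})$ that is bounded below. The only real difficulty is that a proper $f$ may take the value $+\infty$, so its image need not lie in $L^0(\mathcal{F})$; most of the work goes into removing this obstruction by passing to the effective domain.

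First I would record two transfer facts. The effective domain $H:=\mathrm{dom}(f)=\{x\in G:f(x)<+\infty\text{ on }\Omega\}$ is again $d$--$\sigma$--stable: given $\{x_n\}\subset H$ and a countable partition $\{A_n\}$ of $\Omega$ to $\mathcal{F}$, the element $x=\sum_n\tilde{I}_{A_n}\cdot x_n$ lies in $G$ by $d$--$\sigma$--stability of $G$, and $f(x)=\sum_n\tilde{I}_{A_n}\cdot f(x_n)<+\infty$ on $\Omega$ by $\sigma$--stability of $f$, so $x\in H$. The same computation shows that $L':=f(H)=\{f(x):x\in H\}$ is a $\sigma$--stable subset of $L^0(\mathcal{F})$: each $f(x)$ with $x\in H$ satisfies $\eta\leq f(x)<+\infty$ on $\Omega$ (finiteness from $H$, lower boundedness from $\eta$), so $L'\subset L^0(\mathcal{F})$, it is bounded below by $\eta$, and it is closed under countable concatenations just as $H$ is.

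The crux is to check that restricting to $H$ does not raise the infimum, i.e. $\bigwedge f(H)=\bigwedge f(G)$. Since $H\subset G$ we have $\bigwedge f(H)\geq\bigwedge f(G)$, so I must show $\bigwedge f(H)\leq f(x)$ for every $x\in G$. Fix such an $x$, put $B=\{\omega:f(x)(\omega)=+\infty\}$, choose any $x_0\in H$, and use $d$--stability of $G$ (a consequence of $d$--$\sigma$--stability) to form $z=\tilde{I}_{B^c}\cdot x+\tilde{I}_{B}\cdot x_0\in G$. Then $f(z)=\tilde{I}_{B^c}\cdot f(x)+\tilde{I}_{B}\cdot f(x_0)$ is finite on all of $\Omega$, so $z\in H$, while $f(z)=f(x)$ on $B^c$; hence $\bigwedge f(H)\leq f(z)=f(x)$ on $B^c$, and on $B$ the inequality $\bigwedge f(H)\leq f(x_0)<+\infty=f(x)$ holds trivially. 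This is the step I expect to be the main obstacle, since it is exactly where the $+\infty$ values must be absorbed and where stability is genuinely used; every other step is a routine transport of $\sigma$--stability along $f$.

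With these preparations the conclusion is immediate. Applying the lower-bound form of Lemma \ref{lemma3.1} to the $\sigma$--stable, bounded-below set $L'\subset L^0(\mathcal{F})$ produces, for each $\varepsilon\in L^0_{++}(\mathcal{F})$, some $g=f(x_{\varepsilon})\in L'$ with $g<\bigwedge L'+\varepsilon$ on $\Omega$, where $x_{\varepsilon}\in H\subset G$; since $\bigwedge L'=\bigwedge f(H)=\bigwedge f(G)$ this gives $f(x_{\varepsilon})\leq\bigwedge f(G)+\varepsilon$. For the dual statement I would observe that boundedness from above by some $\zeta\in L^0(\mathcal{F})$ already forces $f(x)\leq\zeta<+\infty$ for every $x\in G$, so here $H=G$ and no domain reduction is needed: $f(G)$ is directly a $\sigma$--stable, bounded-above subset of $L^0(\mathcal{F})$, and the upper-bound form of Lemma \ref{lemma3.1} yields $x_{\varepsilon}\in G$ with $f(x_{\varepsilon})\geq\bigvee f(G)-\varepsilon$.
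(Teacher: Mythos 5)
Your proof is correct and takes essentially the same route as the paper's: pass to $\mathrm{dom}(f)$, transfer $\sigma$--stability through $f$ so that $f(\mathrm{dom}(f))$ is a $\sigma$--stable, bounded-below subset of $L^0(\mathcal{F})$, apply Lemma \ref{lemma3.1}, and handle the bounded-above case directly since properness plus an upper bound forces $f(G)\subset L^0(\mathcal{F})$. The only difference is that you explicitly verify $\bigwedge f(\mathrm{dom}(f))=\bigwedge f(G)$ via the concatenation $z=\tilde{I}_{B^c}\cdot x+\tilde{I}_{B}\cdot x_0$, a step the paper dismisses as obvious; your argument is precisely the justification that claim needs, so this is a welcome refinement rather than a departure.
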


\begin{proof}
Let $f$ be bounded from below and $dom(f) = \{ x \in G : f(x) < +\infty$ on $\Omega \}$, then $dom(f) = \{ x \in G : f(x) \in L^0(\mathcal{F})$, and is nonempty since $f$ is proper. Further, it is obvious that $f(dom(f))$ is bounded from below and $\bigwedge f(dom(f)) = \bigwedge f(G)$, it remains to check that $f(dom(f))$ is $\sigma$--stable, in fact, let $\{ x_n : n \in N \}$ be a sequence in $dom(f)$ and $\{ A_n : n \in N \}$ a countable partition of $\Omega$ to $\mathcal{F}$, then there exists unique one $x \in G$ such that $x = \sum^\infty_{n=1} \tilde{I}_{A_n} \cdot x_n$, it follows from $f(x) = \sum^\infty_{n=1} \tilde{I}_{A_n} \cdot f(x_n)$ that $f(x) \in L^0(\mathcal{F})$, namely $x \in dom(f)$, which also implies that $f(x) \in f(dom(f))$. Thus there exists $x_{\varepsilon} \in dom(f) \subset G$ by Lemma \ref{lemma3.1} such that $f(x_{\varepsilon}) \leq \bigwedge f(dom(f)) + \varepsilon = \bigwedge f(G) + \varepsilon$.

Finally, if $f$ is bounded from above, then $f(G) \subset L^0(\mathcal{F})$ since $f$ is proper. Similarly to the proof of $\sigma$--stability of $f(dom(f))$ as above, one can easily see that $f(G)$ is also $\sigma$--stable in $L^0(\mathcal{F})$, again by Lemma \ref{lemma3.1} there exists $x_{\varepsilon} \in G$ such that $f(x_{\varepsilon}) \geq \bigvee f(G) - \varepsilon$.
\end{proof}

\par
Theorem \ref{theorem3.5} below is a good generalization of Proposition 3.8 of \cite{GY12}.

\begin{theorem}\label{theorem3.5}
Let $(E,d)$ be a $d$--$\sigma$--stable $RM$ space with base $(\Omega,\mathcal{F},P)$ and $f: E\rightarrow \bar{L}^0(\mathcal{F})$ a proper and $\sigma$--stable function. Then $f$ is $\mathcal{T}_{\varepsilon,\lambda}$--lower semicontinuous iff $f$ is $\mathcal{T}_c$--lower semicontinuous.
\end{theorem}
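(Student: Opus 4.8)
The plan is to realize both semicontinuity conditions as closedness of one and the same set, namely $epi(f)$, inside the product $RM$ space $(E\times L^0(\mathcal{F}),d)$, and then to reduce the equivalence to the coincidence of the two closures guaranteed by Theorem \ref{theorem2.12}(3). Recall from the opening remarks of this section that the $(\varepsilon,\lambda)$--topology and the $L^0$--topology on $E\times L^0(\mathcal{F})$ induced by $d((x,r),(y,s))=d(x,y)+|r-s|$ are exactly the product topologies $\mathcal{T}_{\varepsilon,\lambda}\times\mathcal{T}^2_{\varepsilon,\lambda}$ and $\mathcal{T}_c\times\mathcal{T}^2_c$, where $\mathcal{T}^2_{\varepsilon,\lambda},\mathcal{T}^2_c$ are the two topologies on $L^0(\mathcal{F})$. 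Consequently $f$ is $\mathcal{T}_{\varepsilon,\lambda}$--lower semicontinuous iff $epi(f)=epi(f)^-_{\varepsilon,\lambda}$ and $\mathcal{T}_c$--lower semicontinuous iff $epi(f)=epi(f)^-_c$, where the closures are taken in the product space. Since the $L^0$--topology is finer than the $(\varepsilon,\lambda)$--topology, the implication from $\mathcal{T}_{\varepsilon,\lambda}$--lower semicontinuity to $\mathcal{T}_c$--lower semicontinuity is automatic; the whole content lies in the converse.

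The key step I would carry out first is to verify that $epi(f)$ is a $d$--$\sigma$--stable subset of $E\times L^0(\mathcal{F})$. Given a sequence $\{(x_n,r_n):n\in N\}$ in $epi(f)$ (so $f(x_n)\leq r_n$ with each $r_n\in L^0(\mathcal{F})$) and a countable partition $\{A_n:n\in N\}$ of $\Omega$ to $\mathcal{F}$, I would use the $d$--$\sigma$--stability of $E$ to form $x=\sum_{n=1}^{\infty}\tilde{I}_{A_n}\cdot x_n$ and the $\sigma$--stability of $L^0(\mathcal{F})$ to form $r=\sum_{n=1}^{\infty}\tilde{I}_{A_n}\cdot r_n\in L^0(\mathcal{F})$. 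The crucial use of the hypotheses on $f$ is that its $\sigma$--stability gives $f(x)=\sum_{n=1}^{\infty}\tilde{I}_{A_n}\cdot f(x_n)$, whence $\tilde{I}_{A_n}\cdot f(x)=\tilde{I}_{A_n}\cdot f(x_n)\leq \tilde{I}_{A_n}\cdot r_n=\tilde{I}_{A_n}\cdot r$ for every $n$; summing over the partition yields $f(x)\leq r$, so $(x,r)\in epi(f)$ (here properness together with $f(x_n)\leq r_n$ keeps $f(x)$ finite, so $(x,r)$ genuinely lies in $E\times L^0(\mathcal{F})$). Finally $\tilde{I}_{A_n}\cdot d((x,r),(x_n,r_n))=\tilde{I}_{A_n}\cdot d(x,x_n)+\tilde{I}_{A_n}\cdot|r-r_n|=0$ for each $n$, which establishes the required $d$--$\sigma$--stability.

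With this in hand the conclusion is immediate: applying Theorem \ref{theorem2.12}(3) to $G=epi(f)$ in the product $RM$ space gives $epi(f)^-_{\varepsilon,\lambda}=epi(f)^-_c$. Therefore $epi(f)=epi(f)^-_{\varepsilon,\lambda}$ holds iff $epi(f)=epi(f)^-_c$ holds, which is precisely the asserted equivalence between $\mathcal{T}_{\varepsilon,\lambda}$--lower semicontinuity and $\mathcal{T}_c$--lower semicontinuity of $f$. I expect the only delicate point to be the verification that $epi(f)$ is $d$--$\sigma$--stable, and within it the step showing $(x,r)\in epi(f)$: this is exactly where the $\sigma$--stability of $f$ is indispensable, and is the reason why the bare $RM$--space structure alone does not suffice to force the two notions of lower semicontinuity to coincide.
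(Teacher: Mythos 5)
Your proposal is correct and follows essentially the same route as the paper: the paper's proof simply observes that $epi(f)$ is ``clearly'' $d$--$\sigma$--stable in $E\times L^0(\mathcal{F})$ and then applies $(3)$ of Theorem \ref{theorem2.12}. The only difference is that you spell out the verification of the $d$--$\sigma$--stability of $epi(f)$ (via the $d$--$\sigma$--stability of $E$, the $\sigma$--stability of $L^0(\mathcal{F})$, and the $\sigma$--stability of $f$), which the paper leaves implicit.
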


\begin{proof}
Since $epi(f)=\{(x,r)\in E\times L^0(\mathcal{F}) : f(x)\leq r\}$ is clearly $d$--$\sigma$--stable in $E\times L^0(\mathcal{F})$, it follows from $(3)$ of Theorem \ref{theorem2.12} that $epi(f)$ is $\mathcal{T}_{\varepsilon,\lambda}$--closed iff it is $\mathcal{T}_c$--closed.
\end{proof}

\par
With the notion of $d$--$\sigma$--stability, we are able to generalize Theorems 3.6, 3.10 and 3.11 of \cite{GY12} to Theorems \ref{theorem3.6}, \ref{theorem3.7} and \ref{theorem3.8} below which are very concise and even comparable to the corresponding classical results in metric spaces\cite{Eke74}.

\begin{theorem}\label{theorem3.6}
Let $(E,d)$ be a $d$--$\sigma$--stable $(\varepsilon,\lambda)$--complete $RM$ space with base $(\Omega,\mathcal{F},P)$, $\varepsilon\in L^0_{++}(\mathcal{F})$ and $f: E\rightarrow \bar{L}^0(\mathcal{F})$ proper, $\sigma$--stable, $\mathcal{T}_{\varepsilon,\lambda}$--lower semicontinuous and bounded from below. Then for any given point $x_0\in E$ satisfying $f(x_0)\leq\bigwedge f(E)+\varepsilon$ and any given $\alpha\in L^0_{++}(\mathcal{F})$, there exists $z\in E$ such that the following hold:
\begin{enumerate}[$(1)$]
\item $f(z)\leq f(x_0)-\alpha\cdot d(z,x_0)$;
\item $\|z-x_0\|\leq \alpha^{-1}\cdot \varepsilon$;
\item For each $x\in E$ such that $x\neq z, f(x)>f(z)-\alpha\cdot d(x,z)$, where $``>"$ means $``\geq"$ and $``\neq"$.
\end{enumerate}
\end{theorem}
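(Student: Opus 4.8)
The plan is to adapt the classical partial-order proof of Ekeland's principle, replacing the real infimum by the $\bar{L}^0(\mathcal{F})$-valued infimum and supplying the ``almost attainment'' step by Lemma \ref{lemma3.1}; the construction can be kept inside $E$ only because the relevant sub-level sections turn out to be $d$--$\sigma$--stable. (I read the symbol $\|z-x_0\|$ in $(2)$ as $d(z,x_0)$, since $E$ carries no norm.) Concretely, I would introduce on $dom(f)$ the relation $y\preceq x$ iff $f(y)+\alpha\cdot d(x,y)\leq f(x)$. Using $(RM\text{--}1)$--$(RM\text{--}4)$ and $\alpha\in L^0_{++}(\mathcal{F})$ one checks at once that $\preceq$ is reflexive, that $(RM\text{--}4)$ gives transitivity, and that $\alpha>0$ together with $(RM\text{--}3)$ gives antisymmetry, so $\preceq$ is a partial order.

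Next I would build a $\preceq$--decreasing sequence. Put $S_n=\{y\in E:y\preceq x_n\}$, starting from the given $x_0$ (note $f(x_0)\leq\bigwedge f(E)+\varepsilon<+\infty$, so $x_0\in dom(f)$ and every $y\in S_n$ has $\eta\leq f(y)\leq f(x_n)<+\infty$). The key point, and the place where $d$--$\sigma$--stability is indispensable, is that each $S_n$ is $d$--$\sigma$--stable: given $y_k\in S_n$ and a partition $\{A_k\}$, the concatenation $y=\sum_k\tilde{I}_{A_k}\cdot y_k$ exists in $E$, and since $f$ is $\sigma$--stable while $d(x_n,y)=\sum_k\tilde{I}_{A_k}\cdot d(x_n,y_k)$ (as in the proof of Theorem \ref{theorem2.10}), one gets $f(y)+\alpha\, d(x_n,y)=\sum_k\tilde{I}_{A_k}\cdot(f(y_k)+\alpha\, d(x_n,y_k))\leq f(x_n)$, i.e. $y\in S_n$. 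Hence $f(S_n)$ is a $\sigma$--stable subset of $L^0(\mathcal{F})$ bounded below, and Lemma \ref{lemma3.1} supplies $x_{n+1}\in S_n$ with $f(x_{n+1})<\bigwedge f(S_n)+c_n$ on $\Omega$, where I choose $c_n=\alpha\cdot 2^{-n}\in L^0_{++}(\mathcal{F})$.

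I would then verify convergence. For $m>n$ one has $x_m\preceq x_{n+1}\in S_n$, so $\alpha\, d(x_{n+1},x_m)\leq f(x_{n+1})-f(x_m)\leq f(x_{n+1})-\bigwedge f(S_n)\leq c_n$, whence $d(x_{n+1},x_m)\leq 2^{-n}$ and $\{x_n\}$ is $(\varepsilon,\lambda)$--Cauchy; by completeness $x_n\to z$ in $\mathcal{T}_{\varepsilon,\lambda}$. Since $f(x_n)$ is nonincreasing and bounded below it converges in probability to $\bar{f}:=\bigwedge_n f(x_n)\in L^0(\mathcal{F})$, and $\mathcal{T}_{\varepsilon,\lambda}$--lower semicontinuity (closedness of $epi(f)$) forces $f(z)\leq\bar{f}$. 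Passing to the limit in $f(x_m)+\alpha\, d(x_n,x_m)\leq f(x_n)$ gives $f(z)+\alpha\, d(x_n,z)\leq f(x_n)$, i.e. $z\preceq x_n$ for every $n$; the case $n=0$ is exactly $(1)$, and then $(2)$ follows from $\alpha\, d(z,x_0)\leq f(x_0)-f(z)\leq(\bigwedge f(E)+\varepsilon)-\bigwedge f(E)=\varepsilon$.

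Finally, the strict minimality $(3)$. First I would show that $w\preceq z$ forces $w=z$: from $w\preceq z\preceq x_n$ one gets $w\in S_n$, so $f(w)\geq\bigwedge f(S_n)>f(x_{n+1})-c_n$, while $w\preceq x_{n+1}$ gives $\alpha\, d(x_{n+1},w)\leq f(x_{n+1})-f(w)<c_n\to 0$, whence $d(z,w)=\lim_n d(x_{n+1},w)=0$. The delicate part, which is the genuine obstacle and the reason $\sigma$--stability is needed for the \emph{precise} form, is upgrading this to the stated pointwise conclusion for an arbitrary $x\neq z$. I would argue by gluing: set $A=\{f(x)<f(z)-\alpha\, d(x,z)\}$ and suppose $P(A)>0$; using $d$--stability form $x'=\tilde{I}_A\cdot x+\tilde{I}_{A^c}\cdot z$, so that $f(x')=\tilde{I}_A\cdot f(x)+\tilde{I}_{A^c}\cdot f(z)$ and $d(x',z)=\tilde{I}_A\cdot d(x,z)$, which yields $f(x')+\alpha\, d(x',z)\leq f(z)$, i.e. $x'\preceq z$, hence $x'=z$; but then $\tilde{I}_A\cdot f(x)=\tilde{I}_A\cdot f(z)$ contradicts $f(x)<f(z)$ on $A$ (recall $d(x,z)=0$ on $A$). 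Thus $f(x)\geq f(z)-\alpha\, d(x,z)$ a.s.; and if equality held a.s. then $x\preceq z$, forcing $x=z$, a contradiction. This delivers both the ``$\geq$'' and the ``$\neq$'' demanded by the convention, completing $(3)$.
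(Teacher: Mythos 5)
Your proof is correct, but it takes a genuinely different route from the paper's. The paper's own proof is very short: it invokes Theorem 2.12 of \cite{GY12} --- the previously known Ekeland principle on an arbitrary $(\varepsilon,\lambda)$--complete $RM$ space, which needs no stability hypothesis --- to obtain $(1)$, $(2)$ and the weak minimality statement $(3)'$: for each $x\neq z$, $f(x)+\alpha\cdot d(x,z)\nleqslant f(z)$; the only new work in the paper is the upgrade from $(3)'$ to the precise form $(3)$, carried out by exactly the gluing argument in your last paragraph (form $\bar v=\tilde{I}_A\cdot v+\tilde{I}_{A^c}\cdot z$ on the bad set $A$ and contradict $(3)'$). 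You instead rebuild the whole principle from scratch: the partial order $\preceq$, the iterative choice of $x_{n+1}$ via Lemma \ref{lemma3.1}, the Cauchy estimate, and lower semicontinuity to get $z\preceq x_n$ for all $n$; your observation that each section $S_n=\{y\in E: y\preceq x_n\}$ is $d$--$\sigma$--stable (so that $f(S_n)$ is a $\sigma$--stable, bounded-from-below subset of $L^0(\mathcal{F})$ and Lemma \ref{lemma3.1} applies) is precisely what makes this self-contained construction go through, and your ``minimality'' statement $w\preceq z\Rightarrow w=z$ is equivalent to the paper's $(3)'$, so your final step coincides with the paper's upgrading step. What the paper's route buys is brevity, at the cost of outsourcing the analytic core to \cite{GY12}; what yours buys is self-containedness, and it makes visible that $d$--$\sigma$--stability enters not only in the final pointwise upgrade but already in the construction itself, through the stability of the sublevel sections. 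Your reading of the misprint $\|z-x_0\|$ as $d(z,x_0)$ in $(2)$ is also the intended one (compare $(2)$ of Theorem \ref{theorem3.7}).
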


\begin{proof}
It follows from Theorem 2.12 of \cite{GY12} that $(1), (2)$ and $(3)'$ below hold: \\
$(3)'$. For each $x\in E$ such that $x\neq z, f(x)+ \alpha d(x,z)\nleqslant f(z)$.
\par
If there exists some $v\in E$ with $v\neq z$ such that $(3)$ is not true. If $f(v)=f(z)-\alpha\cdot d(v,z)$, this contradicts $(3)'$. If $\alpha\cdot d(z,v)+f(v)\neq f(z)$, then $P(A)>0$, where $A=\{\omega\in \Omega: \xi^0(\omega)<\eta^0(\omega)\}$ and $\xi^0$ and $\eta^0$ are arbitrarily chosen representatives of $\alpha\cdot d(z,v)+f(v)$ and $f(z)$. Let $\bar{v}=\tilde{I}_A\cdot v+\tilde{I}_{A^c}\cdot z$, then we have the following two assertions:\\
$(4)$. $\bar{v}\neq z$;\\
$(5)$. $\alpha\cdot d(z,\bar{v})+f(\bar{v})\leq f(z)$.
\par
In fact, if $\bar{v}=z,$ then $\tilde{I}_A\cdot d(z,v)=0$, further $f(\bar{v})=\tilde{I}_A\cdot f(v)+\tilde{I}_{A^c}\cdot f(z)$ implies $\tilde{I}_A\cdot f(z)=\tilde{I}_A\cdot f(\bar{v})=\tilde{I}_A\cdot f(v)$ and $\tilde{I}_{A^c}\cdot f(z)=\tilde{I}_{A^c}\cdot f(\bar{v})$, so $\tilde{I}_A\cdot \alpha\cdot d(v,z)+ \tilde{I}_A\cdot f(v)< \tilde{I}_A\cdot f(z)$ on $A$ by the definition of $A$, namely $\tilde{I}_A\cdot f(v)< \tilde{I}_A\cdot f(z)$ on $A$, which contradicts with $\tilde{I}_A\cdot f(v)=\tilde{I}_A\cdot f(z)$, and thus $(4)$  must hold.
\par
As for $(5)$, on one hand, $\tilde{I}_A\cdot (\alpha\cdot d(z,\bar{v})+f(\bar{v}))= \alpha \cdot \tilde{I}_A\cdot d(z,\bar{v})+ \tilde{I}_A\cdot f(\bar{v})=\tilde{I}_A\cdot \alpha \cdot d(z,v) +\tilde{I}_A\cdot f(v)$ $($ by noticing $\tilde{I}_A\cdot d(z,\bar{v})= \tilde{I}_A\cdot d(z,v)$ by the triangle inequality $)<\tilde{I}_A\cdot f(z)$ on $A$, namely $\alpha\cdot d(z,\bar{v})+ f(\bar{v})< f(z)$ on $A$; on the other hand, $\tilde{I}_{A^c}\cdot (\alpha\cdot d(z,\bar{v})+f(\bar{v}))= \alpha \cdot \tilde{I}_{A^c}\cdot d(z,\bar{v})+ \tilde{I}_{A^c}\cdot f(\bar{v})=\tilde{I}_{A^c}\cdot f(z)$ $($ by noticing $\tilde{I}_{A^c}\cdot d(z,\bar{v})=0$ by the definition of $\bar{v} )$, so $\alpha\cdot d(z,\bar{v})+f(\bar{v})=f(z)$ on $A^c$. To sum up, $\alpha\cdot d(z,\bar{v})+f(\bar{v}) \leq f(z)$, namely $(5)$ also holds, but this contradicts $(3)'$.
\end{proof}

\begin{theorem}\label{theorem3.7}
Let $(E,d)$ be a $d$--$\sigma$--stable $L^0$--complete $RM$ space with base $(\Omega,\mathcal{F},P)$, $\varepsilon\in L^0_{++}(\mathcal{F})$ and $f: E\rightarrow \bar{L}^0(\mathcal{F})$ proper, $\sigma$--stable, $\mathcal{T}_c$--lower semicontinuous and bounded from below. Then for any given point $x_0$ in $E$ satisfying $f(x_0)\leq \bigwedge f(E)+\varepsilon$ and any given $\alpha\in L^0_{++}(\mathcal{F})$, there exists $z\in E$ such that the following hold:
\begin{enumerate}[$(1)$]
\item $f(z)\leq f(x_0)-\alpha\cdot d(z,x_0)$;
\item $d(z,x_0)\leq \alpha^{-1}\cdot \varepsilon$;
\item For each $x\in E$ such that $x\neq z, f(x)>f(z)-\alpha\cdot d(x,z)$.
\end{enumerate}
\end{theorem}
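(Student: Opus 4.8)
The plan is to deduce Theorem \ref{theorem3.7} directly from the already-established Theorem \ref{theorem3.6}, exploiting the fact that on a $d$--$\sigma$--stable $RM$ space the two notions of completeness coincide and the two notions of lower semicontinuity coincide. The conclusions $(1)$, $(2)$, $(3)$ of the two statements are literally identical, so the entire task reduces to checking that the hypotheses of Theorem \ref{theorem3.7} imply those of Theorem \ref{theorem3.6}.

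First I would invoke Theorem \ref{theorem2.13}: since $(E,d)$ is $d$--$\sigma$--stable and $L^0$--complete, it is automatically $(\varepsilon,\lambda)$--complete. Next, because $E$ is $d$--$\sigma$--stable and $f$ is proper and $\sigma$--stable, Theorem \ref{theorem3.5} tells us that $f$ is $\mathcal{T}_c$--lower semicontinuous iff it is $\mathcal{T}_{\varepsilon,\lambda}$--lower semicontinuous; hence the assumed $\mathcal{T}_c$--lower semicontinuity yields $\mathcal{T}_{\varepsilon,\lambda}$--lower semicontinuity. The remaining hypotheses---that $f$ is proper, $\sigma$--stable and bounded from below, that $x_0 \in E$ satisfies $f(x_0) \leq \bigwedge f(E) + \varepsilon$, and that $\alpha \in L^0_{++}(\mathcal{F})$---are common to both statements. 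Thus every hypothesis of Theorem \ref{theorem3.6} is met, and applying it produces $z \in E$ satisfying $(1)$, $(2)$ and $(3)$, which are exactly the three conclusions required here.

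There is essentially no computational obstacle: once the equivalences furnished by Theorems \ref{theorem2.13} and \ref{theorem3.5} are in hand, Theorem \ref{theorem3.7} is a formal corollary of Theorem \ref{theorem3.6}. The only point demanding care is to confirm that the equivalence of the two completenesses (Theorem \ref{theorem2.13}) and of the two lower semicontinuities (Theorem \ref{theorem3.5}) genuinely rely \emph{only} on the $d$--$\sigma$--stability of $E$ (and, for the latter, on $f$ being proper and $\sigma$--stable), so that nothing is lost when transferring between the $(\varepsilon,\lambda)$--framework and the $L^0$--framework. If one preferred a self-contained argument instead, the alternative would be to mimic the proof of Theorem \ref{theorem3.6} verbatim---applying an $L^0$--complete analogue of Theorem 2.12 of \cite{GY12} to obtain $(1)$, $(2)$ and the weaker $(3)'$, and then running the same localization step (splitting on the set $A=\{\omega : \alpha\cdot d(z,v)+f(v)<f(z)\}$ and testing against $\bar v=\tilde I_A\cdot v+\tilde I_{A^c}\cdot z$) to upgrade $(3)'$ to $(3)$---but the reduction via Theorems \ref{theorem2.13} and \ref{theorem3.5} is markedly shorter and is the route I would take.
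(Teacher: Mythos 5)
Your proposal is correct and is precisely the paper's own proof: the authors deduce Theorem \ref{theorem3.7} by combining Theorem \ref{theorem2.13} (to pass from $L^0$--completeness to $(\varepsilon,\lambda)$--completeness under $d$--$\sigma$--stability), Theorem \ref{theorem3.5} (to pass from $\mathcal{T}_c$-- to $\mathcal{T}_{\varepsilon,\lambda}$--lower semicontinuity), and then Theorem \ref{theorem3.6}. Your verification that all remaining hypotheses transfer unchanged matches the paper's (terser) reasoning exactly.
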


\begin{proof}
It follows from Theorems \ref{theorem2.13}, \ref{theorem3.5} and \ref{theorem3.6}
\end{proof}
\par
For the $(\varepsilon,\lambda)$--version of the Caristi's fixed point theorem in complete $RM$ spaces, please refer to \cite[Theorem 2.14]{GY12}, which, combined with Theorem \ref{theorem2.13} and \ref{theorem3.5}, leads directly to the following:

\begin{theorem}\label{theorem3.8}
Let $(E,d)$ be a $d$--$\sigma$--stable $L^0$--complete $RM$ space with base $(\Omega,\mathcal{F},P)$ and $f: E\rightarrow \bar{L}^0(\mathcal{F})$ proper, $\sigma$--stable, $\mathcal{T}_c$--lower semicontinuous and bounded from below. If $T: E \rightarrow E$ satisfies $f(T(x))+ d(T(x),x)\leq f(x)$ for any $x\in E$, then $T$ has a fixed point.
\end{theorem}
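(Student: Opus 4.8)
The plan is to derive Theorem \ref{theorem3.8} as a transfer of the already--known $(\varepsilon,\lambda)$--version of Caristi's fixed point theorem, namely \cite[Theorem 2.14]{GY12}, by exploiting the fact that $d$--$\sigma$--stability forces the $L^0$--setting and the $(\varepsilon,\lambda)$--setting to agree on precisely the two features the Caristi hypothesis rests on: completeness of the space and lower semicontinuity of $f$. Accordingly, the whole argument amounts to checking that the hypotheses stated here in the $\mathcal{T}_c$ language translate verbatim into the $\mathcal{T}_{\varepsilon,\lambda}$ language, after which the existing theorem is quoted.

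First I would pass to $(\varepsilon,\lambda)$--completeness. Since $(E,d)$ is $d$--$\sigma$--stable and $L^0$--complete, Theorem \ref{theorem2.13} gives at once that $(E,d)$ is $(\varepsilon,\lambda)$--complete. Next I would upgrade the lower semicontinuity hypothesis: because $E$ is $d$--$\sigma$--stable and $f$ is proper and $\sigma$--stable, Theorem \ref{theorem3.5} applies and shows that the $\mathcal{T}_c$--lower semicontinuity of $f$ is equivalent to its $\mathcal{T}_{\varepsilon,\lambda}$--lower semicontinuity; in particular $f$ is $\mathcal{T}_{\varepsilon,\lambda}$--lower semicontinuous. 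The remaining data are left untouched: $f$ stays proper and bounded from below, and the dissipative condition $f(T(x))+d(T(x),x)\leq f(x)$ for all $x\in E$ is a purely pointwise inequality in $\bar{L}^0(\mathcal{F})$ and so is insensitive to the choice of topology.

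With these reformulations in hand, every hypothesis of the $(\varepsilon,\lambda)$--version \cite[Theorem 2.14]{GY12} is met: $(E,d)$ is $(\varepsilon,\lambda)$--complete, $f$ is proper, $\mathcal{T}_{\varepsilon,\lambda}$--lower semicontinuous and bounded from below, and $T$ satisfies the Caristi inequality. Applying that theorem produces a point $z\in E$ with $T(z)=z$, which is the desired fixed point.

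I do not expect a genuine analytic obstacle, since all the real work has been isolated into Theorems \ref{theorem2.13} and \ref{theorem3.5}. The one point that deserves care is verifying that the hypotheses of Theorem \ref{theorem3.5} are genuinely in force--in particular that $f$ is $\sigma$--stable and $E$ is $d$--$\sigma$--stable--because it is exactly this $\sigma$--stability that collapses the gap between the two uniformities and between the two notions of lower semicontinuity; without it the $L^0$--setting is strictly stronger than the $(\varepsilon,\lambda)$--setting (compare Remark \ref{remark2.14}) and the transfer would break down.
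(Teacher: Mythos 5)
Your proposal is correct and is exactly the paper's own argument: the paper derives Theorem \ref{theorem3.8} by combining the $(\varepsilon,\lambda)$--version of Caristi's fixed point theorem \cite[Theorem 2.14]{GY12} with Theorem \ref{theorem2.13} (to transfer completeness) and Theorem \ref{theorem3.5} (to transfer lower semicontinuity), precisely as you do. Your additional remark that the Caristi inequality itself is topology--independent and that $\sigma$--stability is the hypothesis making the transfer possible is a correct and sensible elaboration of the same route.
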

\par
Let $(E,d)$ be an $RM$ space with base $(\Omega,\mathcal{F},P)$, a nonempty subset $G$ of $E$ is said to a.s. bounded if $D(G): =\bigvee\{d(x,y): x,y\in G\}\in L^0_+(\mathcal{F})$, called the random diameter of $D$. In fact, $G$ is a.s. bounded iff $\{d(x,y): x,y\in G\}$ is bounded in order in $(L^0(\mathcal{F}),\leq)$.

\begin{definition}\label{definition3.9}
Let $(E,d)$ be an $RM$ space with base $(\Omega,\mathcal{F},P)$, $CB(E)$ the family of a.s. bounded and $\mathcal{T}_{\varepsilon,\lambda}$--closed nonempty subsets of $E$ and $CB_{\sigma}(E)=\{G\in CB(E): G$ is $d$--$\sigma$--stable$\}$. Define the random Hausdorff metric $H: CB_{\sigma}(E)\times CB_{\sigma}(E) \rightarrow L^0_+(\mathcal{F})$ by $H(G_1,G_2)=\max\{\bigvee_{x_1\in G_1}d(x_1,G_2), \bigvee_{x_2\in G_2} d(x_2,G_1)\}$ for any $G_1$ and $G_2$ in $CB_{\sigma}(E)$, where $d(x,G)=\bigwedge \{d(x,g): g\in G\}$ denotes the random distance from $x\in E$ to a nonempty subset $G$ of $E$.
\end{definition}

\begin{remark}\label{remark3.10}
By $(2)$ of Theorem \ref{theorem2.12}, it is easy to check that $(CB_{\sigma}(E), H)$ is an $RM$ space with base $(\Omega,\mathcal{F},P)$, and if $(E,d)$ is an $(\varepsilon,\lambda)$--complete $RM$ space then $(CB_{\sigma}(E), H)$ is also $(\varepsilon,\lambda)$--complete by a similar reasoning of the classical Hausdorff distance.
\end{remark}

\begin{lemma}\label{lemma3.11}
Let $(E,d)$ be the same as in Definition \ref{definition3.9}, $\varepsilon\in L^0_{++}(\mathcal{F})$, $G_1$ and $G_2\in CB_{\sigma}(E)$. Then for any given $g_1\in G_1$, there exists some $g_2\in G_2$ such that $d(g_1,g_2)\leq d(g_1,G_2)+\varepsilon$.
\end{lemma}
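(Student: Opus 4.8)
The plan is to reduce the statement to the approximation property for the infimum of a $\sigma$--stable subset of $L^0(\mathcal{F})$, which is precisely the content of Lemma \ref{lemma3.1}. First I would fix $g_1 \in G_1$ and introduce the set $L := \{ d(g_1,g) : g \in G_2 \} \subseteq L^0_+(\mathcal{F})$, so that by the very definition of the random distance in Definition \ref{definition3.9} one has $d(g_1,G_2) = \bigwedge L$. The goal then becomes the purely $L^0(\mathcal{F})$--theoretic task of producing an element of $L$ that lies below $\bigwedge L + \varepsilon$.

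The key observation is that $L$ is a $\sigma$--stable subset of $L^0(\mathcal{F})$. This follows at once from part $(4)$ of Theorem \ref{theorem2.10}, since $G_2$ is $d$--$\sigma$--stable and $g_1 \in E$ is a fixed point. Moreover $L$ is bounded from below, since $L \subseteq L^0_+(\mathcal{F})$ and hence $0$ is a lower bound (one could alternatively invoke the a.s. boundedness of $G_2$, but only the existence of a lower bound is actually needed).

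With these two facts in hand, I would apply the infimum version of Lemma \ref{lemma3.1} to the $\sigma$--stable, below--bounded set $L$: for the given $\varepsilon \in L^0_{++}(\mathcal{F})$ there exists some $\ell \in L$ with $\ell < \bigwedge L + \varepsilon$ on $\Omega$, and a fortiori $\ell \leq \bigwedge L + \varepsilon = d(g_1,G_2) + \varepsilon$. Since $\ell \in L$, by the definition of $L$ we may write $\ell = d(g_1,g_2)$ for some $g_2 \in G_2$, and this $g_2$ is exactly the point required by the lemma.

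There is no serious obstacle here; the entire content is already packaged inside Theorem \ref{theorem2.10}$(4)$ and Lemma \ref{lemma3.1}. The only point deserving a moment's care is the verification of the hypotheses of Lemma \ref{lemma3.1}, namely the $\sigma$--stability of $L$ and the existence of a lower bound. The former is where the $d$--$\sigma$--stability of $G_2$ is genuinely used: mere $d$--stability would, by Theorem \ref{theorem2.10}$(2)$, only yield closure of $L$ under finite $\bigwedge$ and $\bigvee$, which is not enough to approximate the infimum by a single element; it is the countable concatenation built into $\sigma$--stability that makes Lemma \ref{lemma3.1} applicable. Both hypotheses being immediate, the proof is short.
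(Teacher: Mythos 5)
Your proposal is correct and follows exactly the paper's own argument: both reduce the lemma to the $\sigma$--stability of $L=\{d(g_1,g): g\in G_2\}$ via Theorem \ref{theorem2.10}$(4)$ and then apply the infimum version of Lemma \ref{lemma3.1}. Your write-up simply makes explicit the details (the lower bound $0$, extraction of $g_2$ from $\ell\in L$) that the paper's two-line proof leaves to the reader.
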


\begin{proof}
By $(4)$ of Theorem \ref{theorem2.10}, $\{d(g_1,g): g\in G_2\}$ is $\sigma$--stable. Applying Lemma \ref{lemma3.1} to $\{d(g_1,g): g\in G_2\}$ yields some $g_2\in G_2$ satisfying our desire.
\end{proof}

\par
Theorem \ref{theorem3.12} below generalizes Nadler's fixed point theorem from a complete metric space to an $(\varepsilon,\lambda)$--complete $RM$ space.

\begin{theorem}\label{theorem3.12}
Let $(E,d)$ be an $(\varepsilon,\lambda)$--complete $RM$ space with base $(\Omega,\mathcal{F},P)$, $\alpha\in L^0_+(\mathcal{F})$ satisfying $\alpha<1$ on $\Omega$ and $T: E \rightarrow CB_{\sigma}(E)$ a mapping such that $H(T(x), T(y))\leq \alpha\cdot d(x,y)$ for any $x$ and $y\in E$. Then there exists $x\in E$ such that $x\in T(x)$.
\end{theorem}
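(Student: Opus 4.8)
The plan is to imitate Nadler's classical Picard-type iteration, replacing exact selections (which need not exist in $CB_{\sigma}(E)$) by the approximate selections furnished by Lemma~\ref{lemma3.11}. Fix an arbitrary $x_0\in E$ and choose any $x_1\in T(x_0)$. Given $x_n\in T(x_{n-1})$, observe that since $x_n\in T(x_{n-1})$ one has $d(x_n,T(x_n))\leq \bigvee_{u\in T(x_{n-1})}d(u,T(x_n))\leq H(T(x_{n-1}),T(x_n))\leq \alpha\cdot d(x_{n-1},x_n)$. Applying Lemma~\ref{lemma3.11} to $G_1=T(x_{n-1})$, $g_1=x_n$, $G_2=T(x_n)\in CB_{\sigma}(E)$ with the prescribed error $2^{-n}\in L^0_{++}(\mathcal{F})$, I obtain $x_{n+1}\in T(x_n)$ with $d(x_n,x_{n+1})\leq d(x_n,T(x_n))+2^{-n}\leq \alpha\cdot d(x_{n-1},x_n)+2^{-n}$.

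Writing $a_n=d(x_{n-1},x_n)$, this yields the recursion $a_{n+1}\leq \alpha\cdot a_n+2^{-n}$, which unrolls (a routine geometric estimate) to $a_{n+1}\leq \alpha^n a_1+\sum_{k=1}^{n}\alpha^{n-k}2^{-k}$. Summing over $n$ and using that $\alpha<1$ on $\Omega$, so that $(1-\alpha)^{-1}\in L^0_+(\mathcal{F})$ is finite a.s., I get $\sum_{n\geq 1}a_n\leq (a_1+1)\cdot(1-\alpha)^{-1}<+\infty$ a.s. Consequently the tails $\sum_{j\geq n}a_j$ converge a.s. to $0$, whence $d(x_n,x_m)\leq \sum_{j=n}^{m-1}a_{j+1}\to 0$ a.s., and therefore in probability; that is, $\{x_n:n\in N\}$ is an $(\varepsilon,\lambda)$--Cauchy sequence. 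By the $(\varepsilon,\lambda)$--completeness of $E$ it converges in $\mathcal{T}_{\varepsilon,\lambda}$ to some $x\in E$.

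It remains to show $x\in T(x)$. For each $n$, the triangle inequality together with $x_{n+1}\in T(x_n)$ gives $d(x,T(x))\leq d(x,x_{n+1})+d(x_{n+1},T(x))\leq d(x,x_{n+1})+H(T(x_n),T(x))\leq d(x,x_{n+1})+\alpha\cdot d(x_n,x)$. Both $d(x,x_{n+1})$ and $\alpha\cdot d(x_n,x)$ tend to $0$ in probability as $n\to\infty$ (multiplication by the fixed $\alpha\in L^0(\mathcal{F})$ preserves convergence in probability), while the left-hand side does not depend on $n$; hence $d(x,T(x))=0$. Since $T(x)\in CB_{\sigma}(E)$ is $d$--stable and $\mathcal{T}_{\varepsilon,\lambda}$--closed, part $(2)$ of Theorem~\ref{theorem2.12} yields $x\in (T(x))^{-}_{\varepsilon,\lambda}=T(x)$, the desired fixed point.

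The delicate point, and the reason the $(\varepsilon,\lambda)$--topology is the natural setting, is that the random contraction constant $\alpha$ is only assumed to satisfy $\alpha<1$ on $\Omega$ rather than to be bounded away from $1$: the geometric series $\sum_n\alpha^n=(1-\alpha)^{-1}$ is finite a.s. but its tails need not converge to $0$ in the much stronger $L^0$--order (equivalently $\mathcal{T}_c$) sense, so a proof demanding $L^0$--completeness would stall here. Almost sure convergence of the tails does, however, deliver convergence in probability, which is precisely Cauchyness in the $(\varepsilon,\lambda)$--uniformity, and this is what makes the argument go through under $(\varepsilon,\lambda)$--completeness. The role of $d$--$\sigma$--stability is twofold: it permits the approximate selection of Lemma~\ref{lemma3.11} at every step, and it is what allows the final passage from $d(x,T(x))=0$ to $x\in T(x)$ via Theorem~\ref{theorem2.12}.
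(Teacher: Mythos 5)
Your proof is correct and follows essentially the same route as the paper's: a Nadler-type Picard iteration using the approximate selections of Lemma \ref{lemma3.11}, a geometric-series estimate giving a.s. (hence in probability) convergence of the tails so that the iterates are $(\varepsilon,\lambda)$--Cauchy, $(\varepsilon,\lambda)$--completeness to produce the limit $x$, and the inequality $d(x,T(x))\leq d(x,x_{n+1})+\alpha\cdot d(x_n,x)$ to conclude $d(x,T(x))=0$. The only differences are cosmetic but welcome: your constant tolerances $2^{-n}$ lie in $L^0_{++}(\mathcal{F})$ automatically, so you can skip the paper's preliminary replacement of $\alpha$ by some $\hat{\alpha}\in L^0_{++}(\mathcal{F})$ (needed there because the paper feeds the errors $\alpha^n$ into Lemma \ref{lemma3.11}), and you make explicit, via $(2)$ of Theorem \ref{theorem2.12} together with the $d$--stability and $\mathcal{T}_{\varepsilon,\lambda}$--closedness of $T(x)$, the final passage from $d(x,T(x))=0$ to $x\in T(x)$, which the paper leaves implicit.
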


\begin{proof}
Let $\alpha^0$ be an arbitrarily chosen representative of $\alpha$ and $A=\{\omega\in \Omega: \alpha^0(\omega)=0\}$, define $\hat{\alpha}^0(\omega)=\alpha^0(\omega)$ if $\alpha^0(\omega)>0$ and $\hat{\alpha}^0(\omega)=\frac{1}{2}$ if $\omega\in A$. Further, let $\hat{\alpha}$ be the equivalence class of $\hat{\alpha}^0$, then $\hat{\alpha}\in L^0_{++}(\mathcal{F})$ and satisfies $H(T(x),T(y))\leq \hat{\alpha}\cdot d(x,y)$ for any $x,y\in E$. Thus, we can, without loss of generality, assume $\alpha\in L^0_{++}(\mathcal{F})$.
\par
Taking a given point $x_0\in E$ and $x_1\in T(x_0)$, then there exists some $x_2\in T(x_1)$ by Lemma \ref{lemma3.11} such that $d(x_1,x_2)\leq d(x_1,T(x_1))+ \alpha\leq H(T(x_0), T(x_1))+ \alpha$. Again by Lemma \ref{lemma3.11} there exists $x_3\in T(x_2)$ such that $d(x_2,x_3)\leq d(x_2,T(x_2))+\alpha^2\leq H(T(x_1), T(x_2))+\alpha^2 $.
\par
By induction, there exists a sequence $\{x_n: n\in N\}$ in $E$ such that $x_n\in T(x_{n-1})$ and $d(x_n,x_{n+1})\leq H(T(x_{n-1}), T(x_n))+ \alpha^n$ for any $n\geq 1$. Then it is easy to obtain that $d(x_n,x_{n+1})\leq \alpha^n d(x_0,x_1)+ n\alpha^n$ for any $n\geq 1$. Thus for any $n\leq m, d(x_n,x_{m+1})\leq \sum^m_{i=n}d(x_i,x_{i+1})\leq \sum^m_{i=n} \alpha^i\cdot d(x_0,x_1)+ \sum^m_{i=n}i\cdot \alpha^i$. Further, since $\alpha\in L^0_{++}(\mathcal{F})$ and $\alpha< 1$ on $\Omega$, $d(x_n,x_m)$ converges a.s. to $0$ when $n,m$ tend to $+\infty$, $\{x_n: n\in N\}$ is, of course, a Cauchy sequence in $E$ under the $(\varepsilon,\lambda)$--uniformity on $E$, and hence convergent to some $x\in E$. It follows that for any $n\in N$, $d(x,T(x))\leq d(x,x_n)+ d(x_n,T(x))\leq d(x,x_n)+ \alpha\cdot d(x_{n-1},x)$, so $d(x,T(x))=0$, namely $x\in T(x)$.
\end{proof}

Although the shape and idea of proof of Theorem \ref{theorem3.12} are the same as those of the classical Nadler's fixed point theorem of \cite{N69}, Theorem \ref{theorem3.12} contains more as attested by the following series of corollaries of it mainly because we employ the framework of an $RM$ space!
\par
Let us recall some basic concepts on measurable multivalued functions and multivalued mappings: let $(X,d)$ be a metric space and $2^X$ the family of subsets of $X$, a multivalued function $V: (\Omega,\mathcal{F},P) \rightarrow 2^X$ is said to be measurable $($or weakly measurable in terms of \cite{I77,H75}$)$ if $V^{-1}(G):=\{\omega\in \Omega: V(\omega)\cap G\neq \emptyset\}\in \mathcal{F}$ for any $d$--open set $G$. A mapping $T: \Omega\times X\rightarrow 2^X$ is said to be a multivalued random operator if $T(\cdot,x): \Omega\rightarrow 2^X$ is measurable for each $x\in X$. For the study of random fixed points of multivalued random operators, see, e.g.\cite{BAX66,FMM09,I77,I79,S01}. Corollary \ref{corollary3.13} below can be regarded as a generalization of a random fixed point theorem due to S.Iton\cite{I77}. In fact, we give a new proof of \cite[Theorem]{I77} .

\begin{corollary}\label{corollary3.13}
Let $(X,d)$ be a polish space, $\alpha^0: \Omega\rightarrow [0,+\infty)$ a random variable on $(\Omega,\mathcal{F},P)$ such that $0\leq \alpha^0<1$ a.s. and $T: \Omega\times X\rightarrow CB(X)$ a multivalued random operator, where $CB(X)$ is the family of nonempty bounded closed subsets of $X$. If the following conditions are satisfied:
\begin{enumerate}[$(1)$]
\item There exists $\Omega_0\in \mathcal{F}$ with $P(\Omega_0)=1$ such that $T(\omega,\cdot): (X,d)\rightarrow (CB(X),h)$ is continuous for each $\omega\in \Omega_0$, where $h$ denotes the classical Hausdorff metric on $CB(X)$;
\item $P(\Omega(x,y))=1$ for any $(x,y)\in X \times X$, where $\Omega(x,y)=\{\omega\in \Omega: h(T(\omega,x), T(\omega,y))\leq \alpha^0(\omega)d(x,y)\}$ is assumed to be $\mathcal{F}$--measurable.
\end{enumerate}
Then there exists a random element $x^0: (\Omega,\mathcal{F}, P)\rightarrow (X,d)$ such that $x^0(\omega)\in T(\omega,x^0(\omega))$ for almost all $\omega\in \Omega$.
\end{corollary}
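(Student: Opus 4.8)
The plan is to deduce Corollary \ref{corollary3.13} from the random Nadler fixed point theorem, Theorem \ref{theorem3.12}, by lifting the random operator $T$ to a genuine multivalued contraction on the $RM$ space of random elements. Since $(X,d)$ is polish, Example \ref{example2.8} shows that $E:=L^0(\mathcal{F},X)$, equipped with the random metric $\bar d(x,y):=$ the equivalence class of $d(x^0(\cdot),y^0(\cdot))$, is a $d$--$\sigma$--stable $RM$ space with base $(\Omega,\mathcal{F},P)$; moreover the completeness of $X$ makes $E$ $(\varepsilon,\lambda)$--complete, since a $\bar d$--Cauchy sequence for the $(\varepsilon,\lambda)$--uniformity converges in probability, hence has an a.s.\ convergent subsequence whose pointwise limit is again a strong random element. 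Thus $E$ meets the standing hypotheses of Theorem \ref{theorem3.12}.

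First I would turn $T$ into a mapping $\hat T\colon E\to CB_{\sigma}(E)$. For $x\in E$ with representative $x^0$, condition $(1)$ makes $T$ a Carath\'eodory multifunction, so $\omega\mapsto T(\omega,x^0(\omega))$ is an $\mathcal{F}$--measurable, closed--valued multifunction into the polish space $X$ and, by a measurable selection theorem, admits measurable selections. Set $\hat T(x):=\{y\in E: y^0(\omega)\in T(\omega,x^0(\omega))\text{ a.s.}\}$, the set of equivalence classes of its measurable selections. Exactly as in Example \ref{example2.9}, $\hat T(x)$ is a $d$--$\sigma$--stable subset of $E$; it is nonempty by the selection theorem, $\mathcal{T}_{\varepsilon,\lambda}$--closed because each fibre $T(\omega,x^0(\omega))$ is closed (pass to an a.s.\ convergent subsequence), and a.s.\ bounded because each fibre lies in $CB(X)$, so that $\omega\mapsto\mathrm{diam}\,T(\omega,x^0(\omega))$ is a finite random variable giving the random diameter. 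Hence $\hat T(x)\in CB_{\sigma}(E)$ and $\hat T$ is well defined. Crucially, a fixed point $x\in\hat T(x)$ in $E$ is precisely a random element with $x^0(\omega)\in T(\omega,x^0(\omega))$ a.s., which is the desired conclusion.

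Next I would verify the random contraction estimate $H(\hat T(x),\hat T(y))\le\alpha\cdot\bar d(x,y)$, where $\alpha$ denotes the equivalence class of $\alpha^0$ (note $\alpha\in L^0_+(\mathcal{F})$ and $\alpha<1$ on $\Omega$). This rests on two points. The first is the pointwise Lipschitz inequality $h(T(\omega,x^0(\omega)),T(\omega,y^0(\omega)))\le\alpha^0(\omega)\,d(x^0(\omega),y^0(\omega))$ a.s.: for simple $x^0,y^0$ it follows by restricting condition $(2)$ to the pieces of a common finite partition of $\Omega$, and for general strong random elements it follows by approximating $x^0,y^0$ pointwise by simple random elements and letting them tend to their limits, using the continuity in condition $(1)$ together with the continuity of $h$. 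The second is the identity that $H(\hat T(x),\hat T(y))$ equals the equivalence class of $\omega\mapsto h(T(\omega,x^0(\omega)),T(\omega,y^0(\omega)))$, which follows from $(1)$ of Theorem \ref{theorem2.12}, $(4)$ of Theorem \ref{theorem2.10}, and a measurable--selection (decomposability) argument showing that $\bar d(\,\cdot\,,\hat T(y))$ and the suprema in the definition of $H$ may be computed fibrewise. Combining the two yields the contraction with modulus $\alpha<1$ on $\Omega$, and applying Theorem \ref{theorem3.12} to $\hat T$ produces $x\in E$ with $x\in\hat T(x)$, i.e.\ a representative $x^0$ satisfying $x^0(\omega)\in T(\omega,x^0(\omega))$ for almost all $\omega$.

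I expect the main obstacle to be the contraction estimate, and specifically its two measure--theoretic ingredients. Extending condition $(2)$ from deterministic pairs to arbitrary random elements is delicate because a single null exceptional set must be controlled simultaneously for uncountably many arguments, which is exactly where the Carath\'eodory continuity of condition $(1)$ becomes indispensable. The fibrewise computation of the random Hausdorff metric $H$ is the other delicate point: it requires the decomposability, i.e.\ the $d$--$\sigma$--stability, of the selection sets $\hat T(x)$ rather than merely their boundedness and closedness, so that the infima and suprema defining $H$ are attained fibrewise in the a.s.\ sense.
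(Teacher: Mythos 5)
Your proposal is correct and follows essentially the same route as the paper's own proof: lift $T$ to the selection--set mapping $\hat{T}\colon L^0(\mathcal{F},X)\rightarrow CB_{\sigma}(L^0(\mathcal{F},X))$, verify the random Hausdorff contraction $H(\hat{T}(x),\hat{T}(y))\leq \alpha\cdot d(x,y)$, and apply Theorem \ref{theorem3.12}. The only differences are technical: the paper extends condition $(2)$ to all pairs at once via a countable dense subset of $X$ together with condition $(1)$ (obtaining a single exceptional null set), rather than via your simple--random--element approximation, and it makes your ``fibrewise computation of $H$'' precise by means of Castaing representations (Theorem 5.6 of \cite{H75}), which is exactly the measurable--selection/decomposability argument you anticipate as the delicate step.
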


\begin{proof}
Let $\{x_n:n\in N\}$ is a countable dense subset of $X$ and $\Omega_1=\Omega'\cap \Omega_0\cap (\cap_{i,j}\Omega(x_i,x_j))$, then $\Omega_1\in \mathcal{F}$ and $P(\Omega_1)=1$, where $\Omega'=\{\omega\in \Omega: 0\leq \alpha^0(\omega)<1\}$. Further, by $(1)$ and $(2)$ one can have: $h(T(\omega,x),T(\omega,y))\leq \alpha^0(\omega)d(x,y)$ for any $\omega\in \Omega_1$ and any $x,y\in X$. We can, without loss of generality, assume $\Omega_1=\Omega$ $($ since otherwise we can consider $(\Omega_1,\mathcal{F}_1, P_1)$ instead of $(\Omega,\mathcal{F},P)$, where $\mathcal{F}_1=\Omega_1\cap \mathcal{F}$ and $P_1=P|_{\mathcal{F}_1} )$
\par
Thus, for any two random elements $x^0$ and $y^0: \Omega\rightarrow X$, one has that $h(T(\omega,x^0(\omega)), T(\omega,y^0(\omega)))\leq \alpha^0(\omega)d(x^0(\omega), y^0(\omega))$ for each $\omega\in \Omega$. By Proposition 2 of \cite{I77}, $F: \Omega\rightarrow CB(X)$ defined by $F(\omega)=T(\omega,x^0(\omega))$ for each $\omega\in \Omega$ and each random element $x^0: \Omega \rightarrow X$, is measurable, which induces a mapping $\hat{T}: L^0(\mathcal{F},X)\rightarrow CB_{\sigma}(L^0(\mathcal{F},X))$ by $\hat{T}(g)=\{\hat{g}\in L^0(\mathcal{F},X): \hat{g}$ is the equivalence class of some measurable selection of $F=T(\cdot,g^0(\cdot))\}$, where $g$ is the equivalence class of the random element $g^0: \Omega\rightarrow X$. By Example \ref{example2.8} and \ref{example2.9}, $\hat{T}$ is well defined.
\par
Let $g^0_1$ and $g^0_2$ be two random elements: $\Omega\rightarrow X$ and $g_1$ and $g_2$ respectively their equivalence classes. By Theorem 5.6 of \cite{H75}, there exist two sequences $\{u^0_n: n\in N\}$ and $\{v^0_n: n\in N\}$ of random elements such that $T(\omega,g^0_1(\omega))= cl(\{u^0_n(\omega): n\in N\})$ and $T(\omega,g^0_2(\omega))=cl(\{v^0_n(\omega): n\in N\})$ for each $\omega\in \Omega$, where $``cl"$ stands for the $d$--closure operation. Then $h(T(\omega,g^0_1(\omega)), T(\omega,g^0_2(\omega)))= \max\{\sup_{i\geq 1}\inf_{j\geq 1}d(u^0_i(\omega),\\ v^0_j(\omega)), \sup_{j\geq 1}\inf_{i\geq 1}d(u^0_i(\omega), v^0_j(\omega))\}$, see \cite[pp.88]{I77}. It is not very difficult to check that $H(\hat{T}(g_1),\hat{T}(g_2))= \max \{\bigvee_{i\geq 1}\bigwedge_{j\geq 1}d(u_i,v_j), \bigvee_{j\geq 1}\bigwedge_{i\geq 1}d(u_i,v_j) \}$, where $u_i$ and $v_j$ are the equivalence classes of $u^0_i$ and $v^0_j$, respectively. Thus $H(\hat{T}(g_1),\hat{T}(g_2))\leq \alpha\cdot d(g_1,g_2)$, where $\alpha$ is the equivalence class of $\alpha^0$. By Theorem\ref{theorem3.12}, there exists some $x\in L^0(\mathcal{F}, X)$ such that $x\in \hat{T}(x)$, then an arbitrarily chosen representative $x^0$ of $x$ must satisfy $x^0(\omega)\in T(\omega,x^0(\omega))$ for almost all $\omega\in \Omega$.
\end{proof}

\begin{corollary}\label{corollary3.14}\cite{Guo99}
Let $(E,d)$ be an $(\varepsilon,\lambda)$--complete $RM$ space with base $(\Omega,\mathcal{F},P), \alpha\in L^0_+(\mathcal{F})$ such that $\alpha< 1$ on $\Omega$, and $T: E\rightarrow E$ a mapping satisfying $d(T(x), T(y))\leq \alpha\cdot d(x,y)$ for any $x,y\in E$. Then there exists unique one $x\in E$ such that $T(x)=x$.
\end{corollary}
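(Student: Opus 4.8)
The plan is to deduce this single-valued contraction principle directly from the multivalued Theorem \ref{theorem3.12} by viewing $T$ as a map with singleton values, and then to handle uniqueness separately, since Theorem \ref{theorem3.12} supplies only existence. First I would define $\hat{T}: E \to 2^E$ by $\hat{T}(x) = \{T(x)\}$ and verify that $\hat{T}$ actually takes values in $CB_{\sigma}(E)$. Each singleton $\{T(x)\}$ is nonempty and a.s. bounded with random diameter $0$; it is $\mathcal{T}_{\varepsilon,\lambda}$--closed because $\mathcal{T}_{\varepsilon,\lambda}$ is Hausdorff by Definition \ref{definition2.11}, so singletons are closed; and it is trivially $d$--$\sigma$--stable, since for the constant sequence equal to $T(x)$ and any countable partition $\{A_n : n \in N\}$ of $\Omega$ the element $T(x)$ itself satisfies $\tilde{I}_{A_n} \cdot d(T(x),T(x)) = 0$ for each $n$. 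Hence $\hat{T}(x) \in CB_{\sigma}(E)$ for every $x \in E$.

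Next I would check the contraction hypothesis of Theorem \ref{theorem3.12} for $\hat{T}$. Using $d(p,\{q\}) = \bigwedge\{d(p,h) : h \in \{q\}\} = d(p,q)$ together with the definition of the random Hausdorff metric in Definition \ref{definition3.9}, one computes $H(\{T(x)\},\{T(y)\}) = \max\{d(T(x),\{T(y)\}), d(T(y),\{T(x)\})\} = d(T(x),T(y))$. Therefore $H(\hat{T}(x),\hat{T}(y)) = d(T(x),T(y)) \leq \alpha \cdot d(x,y)$ for all $x,y \in E$, which is precisely the hypothesis required in Theorem \ref{theorem3.12}. Applying that theorem yields some $x \in E$ with $x \in \hat{T}(x) = \{T(x)\}$, that is $T(x) = x$, which establishes existence of a fixed point.

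It remains to prove uniqueness, the one ingredient not furnished by the multivalued theorem. Suppose $x$ and $y$ are both fixed points, so $T(x) = x$ and $T(y) = y$. The contraction condition then gives $d(x,y) = d(T(x),T(y)) \leq \alpha \cdot d(x,y)$, whence $(1-\alpha) \cdot d(x,y) \leq 0$. Since $\alpha < 1$ on $\Omega$, the factor $1-\alpha$ is strictly positive on $\Omega$, while $d(x,y) \geq 0$; comparing representatives pointwise a.s. forces $d(x,y) = 0$, and hence $x = y$ by $(RM$--$3)$. I do not anticipate any genuine obstacle: the only points demanding care are confirming that singletons lie in $CB_{\sigma}(E)$ and running the short uniqueness computation, the latter being the sole step that goes beyond a direct specialization of Theorem \ref{theorem3.12}.
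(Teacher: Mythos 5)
Your proposal is correct and follows essentially the same route as the paper: both define the singleton-valued map $\hat{T}(x)=\{T(x)\}$, check it lands in $CB_{\sigma}(E)$ with $H(\hat{T}(x),\hat{T}(y))=d(T(x),T(y))$, invoke Theorem \ref{theorem3.12} for existence, and obtain uniqueness from the random contraction condition. Your write-up merely supplies details the paper leaves implicit (closedness of singletons under the Hausdorff $\mathcal{T}_{\varepsilon,\lambda}$, and the explicit computation showing $(1-\alpha)\cdot d(x,y)\leq 0$ forces $d(x,y)=0$).
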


\begin{proof}
Define a multivalued mapping $\hat{T}: E \rightarrow CB_{\sigma}(E)$ by $\hat{T}(x)=\{T(x)\}$ for each $x\in E$. Since each singleton set $\{T(x)\}\in CB_{\sigma}(E), \hat{T}$ is well defined and $H(\hat{T}(x), \hat{T}(y))= d(T(x), T(y))\leq \alpha\cdot d(x,y)$ for any $x,y\in E$. It follows from Theorem \ref{theorem3.12} that there exists $x\in E$ such that $x\in \hat{T}(x)= \{T(x)\},$ namely $T(x)=x$. The uniqueness of $x$ comes from the random contraction condition.
\end{proof}

\begin{corollary}\label{corollary3.15}
Let $(E,d)$ be a $d$--$\sigma$--stable $L^0$--complete $RM$ space with base $(\Omega,\mathcal{F},P), \alpha\in L^0_+(\mathcal{F})$ satisfying $\alpha<1$ on $\Omega$, and $T: E\rightarrow E$ satisfying $d(T(x), T(y))\leq \alpha\cdot d(x,y)$ for any $x,y\in E$. Then there exists unique one $x\in E$ such that $T(x)=x$.
\end{corollary}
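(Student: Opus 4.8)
The plan is to reduce Corollary \ref{corollary3.15} to the already proved Corollary \ref{corollary3.14}, the only difference between their hypotheses being the \emph{type} of completeness assumed (here $L^0$--completeness together with $d$--$\sigma$--stability, there $(\varepsilon,\lambda)$--completeness). The key observation is that $(E,d)$ is assumed to be $d$--$\sigma$--stable, so Theorem \ref{theorem2.13} applies: for a $d$--$\sigma$--stable $RM$ space, $(\varepsilon,\lambda)$--completeness and $L^0$--completeness are equivalent. Hence the assumed $L^0$--completeness immediately upgrades to $(\varepsilon,\lambda)$--completeness, and $(E,d)$ becomes an $(\varepsilon,\lambda)$--complete $RM$ space.

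With this upgrade in hand, the remaining hypotheses are unchanged: $\alpha\in L^0_+(\mathcal{F})$ still satisfies $\alpha<1$ on $\Omega$, and $T:E\to E$ still satisfies $d(T(x),T(y))\leq \alpha\cdot d(x,y)$ for all $x,y\in E$. These are exactly the hypotheses of Corollary \ref{corollary3.14}, so I would simply invoke that corollary to conclude that there exists a unique $x\in E$ with $T(x)=x$; the uniqueness comes along for free from the random contraction condition, as already recorded there.

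The point worth flagging is \emph{why} one cannot just run a Picard iteration $x_n=T(x_{n-1})$ directly in the $L^0$--topology and thereby avoid Theorem \ref{theorem2.13}. Although $\alpha<1$ on $\Omega$ forces $\alpha^n\to 0$ almost surely, $\alpha^n$ need not converge to $0$ in the $L^0$--topology when $\alpha$ essentially takes values arbitrarily close to $1$, so the iterates need not be $L^0$--Cauchy. This is precisely the obstruction that the reduction sidesteps: $d$--$\sigma$--stability collapses the distinction between the two completeness notions, allowing the contraction argument to be carried out in the weaker $(\varepsilon,\lambda)$--uniformity (where the iterates of Theorem \ref{theorem3.12}/Corollary \ref{corollary3.14} are genuinely Cauchy in probability) and then transported back. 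Thus there is no genuine new difficulty to overcome; the entire content of the corollary is the clean reduction via Theorem \ref{theorem2.13}, and the substantive work has already been front-loaded into the equivalence of completeness notions and into the Nadler/Banach contraction argument.
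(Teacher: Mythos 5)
Your proposal is correct and takes essentially the same route as the paper: the paper's entire proof of Corollary \ref{corollary3.15} is the one-line reduction ``It follows from Theorem \ref{theorem2.13} and Corollary \ref{corollary3.14}.'' Your extra remark explaining why a direct Picard iteration in the $L^0$--uniformity would fail (since $\alpha^n$ need not tend to $0$ in the $L^0$--topology when $\mathrm{ess\,sup}\,\alpha=1$) is accurate but not part of, nor needed for, the paper's argument.
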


\begin{proof}
It follows from Theorem \ref{theorem2.13} and Corollary \ref{corollary3.14}.
\end{proof}

\begin{remark}\label{remark3.16}
Corollary \ref{corollary3.14} also has a slightly general formulation: if there exists some $n\in N$ such that $d(T^n(x), T^n(y))\leq \alpha\cdot d(x,y)$ for any $x,y\in E$, then $T$ still has unique one $x\in E$ such that $T(x)=x$. Proof is very familiar as follows: by Corollary \ref{corollary3.14}, $T^n$, denoting the $n$th iterate of $T$, has unique one fixed point $x\in E$, since $T^n$ and $T$ are commutative, namely $T^n\circ T= T\circ T^n$, then $T^n(T(x))= T(T^n(x))=T(x)$, one has $T(x)=x$. But it is very the simple observation that motivates Theorem \ref{theorem3.17} below.
\end{remark}

\par
Let $(E,d)$ be a $d$--$\sigma$--stable $RM$ space with base $(\Omega,\mathcal{F},P)$, $T: E\rightarrow E$  and $L: \Omega\rightarrow N$ a positive integer--valued random variable. Define $T^L: E\rightarrow E$ by $T^L(x)=\sum_{k=1}^{\infty}\tilde{I}_{(L=k)}\cdot T^k(x)$ for any $x\in E$, where $(L=k)=\{\omega\in \Omega: L(\omega)=k\}$. $T$ is said to be $\sigma$--stable if $T(\sum_{n=1}^{\infty}\tilde{I}_{A_n}\cdot x_n)=\sum_{n=1}^{\infty}\tilde{I}_{A_n}\cdot T(x_n) $ for each sequence $\{x_n: n\in N\}$ in $E$ and each countable partition $\{A_n: n\in N\}$ of $\Omega$ to $\mathcal{F}$. It is obvious that $T$ and $T^L$ are commutative when $T$ is $\sigma$--stable since $T^L(T(x))=\sum_{k=1}^{\infty}\tilde{I}_{(L=k)}\cdot T^k(T(x))=\sum_{k=1}^{\infty}\tilde{I}_{(L=k)}\cdot T(T^k(x))=T(\sum_{k=1}^{\infty}\tilde{I}_{(L=k)}\cdot T^k(x))=T(T^L(x)) $ for any $x\in E$.

\begin{theorem}\label{theorem3.17}
Let $(E,d)$ be a $d$--$\sigma$--stable $(\varepsilon,\lambda)$--complete $RM$ space with base $(\Omega,\mathcal{F},P)$, $T: E\rightarrow E$ a $\sigma$--stable mapping, $L: \Omega\rightarrow N$ a random variable and $\alpha\in L^0_+(\mathcal{F})$ such that $\alpha<1$ on $\Omega$ and $d(T^L(x),T^L(y))\leq \alpha\cdot d(x,y)$ for any $x,y\in E$. Then $T$ has unique one fixed point.
\end{theorem}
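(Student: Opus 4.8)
The plan is to follow the classical device recalled in Remark \ref{remark3.16}: instead of applying the random contraction principle to $T$ directly, I would apply it to the random iterate $T^L$ and then transfer the resulting fixed point back to $T$ by means of commutativity. Since all the ingredients (the random Banach principle, the commutation relation) are already at hand, the argument should be short.

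First I would note that $T^L\colon E\rightarrow E$ is well-defined: because $E$ is $d$--$\sigma$--stable, for each $x\in E$ the countable concatenation $\sum_{k=1}^{\infty}\tilde{I}_{(L=k)}\cdot T^k(x)$ exists and is the unique element of $E$ determined by the partition $\{(L=k): k\in N\}$ of $\Omega$. By hypothesis $T^L$ satisfies $d(T^L(x),T^L(y))\leq\alpha\cdot d(x,y)$ with $\alpha<1$ on $\Omega$, and $(E,d)$ is $(\varepsilon,\lambda)$--complete; hence Corollary \ref{corollary3.14} applies to $T^L$ and furnishes a unique $u\in E$ with $T^L(u)=u$.

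Next I would invoke the commutativity $T^L\circ T=T\circ T^L$, which holds precisely because $T$ is $\sigma$--stable (this is the computation displayed in the paragraph preceding the theorem). Evaluating it at $u$ gives $T^L(T(u))=T(T^L(u))=T(u)$, so $T(u)$ is again a fixed point of $T^L$. By the uniqueness in Corollary \ref{corollary3.14} one concludes $T(u)=u$, that is, $u$ is a fixed point of $T$.

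Finally, for uniqueness of the fixed point of $T$, I would observe that any $y\in E$ with $T(y)=y$ satisfies $T^k(y)=y$ for every $k\in N$ by induction, so that
\[
T^L(y)=\sum_{k=1}^{\infty}\tilde{I}_{(L=k)}\cdot T^k(y)=\Bigl(\sum_{k=1}^{\infty}\tilde{I}_{(L=k)}\Bigr)\cdot y=y,
\]
because $\{(L=k): k\in N\}$ is a countable partition of $\Omega$. Thus every fixed point of $T$ is automatically a fixed point of $T^L$, whence $y=u$ again by the uniqueness in Corollary \ref{corollary3.14}, and $u$ is the unique fixed point of $T$. The whole argument is routine once Corollary \ref{corollary3.14} is available; the only points that genuinely require care are the well-definedness of $T^L$ (which rests on the $d$--$\sigma$--stability of $E$) and the passage from $T$--fixed points to $T^L$--fixed points, where the partition property of $\{(L=k)\}$ enters. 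I do not anticipate any real obstacle beyond assembling these observations.
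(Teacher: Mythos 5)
Your proposal is correct and takes essentially the same route as the paper: the paper's proof likewise applies Corollary \ref{corollary3.14} to $T^L$ to obtain its unique fixed point and then transfers it to $T$ via the commutativity $T^L\circ T=T\circ T^L$ guaranteed by the $\sigma$--stability of $T$. You merely make explicit two steps the paper leaves implicit, namely the well-definedness of $T^L$ and the observation that every fixed point of $T$ is a fixed point of $T^L$ (which settles uniqueness); both are correct.
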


\begin{proof}
It follows from Corollary \ref{corollary3.14} that $T^L$ has unique one fixed point $x\in E$. Since $T$ and $T^L$ are commutative, $x$ is also the unique fixed point of $T$.
\end{proof}

\begin{corollary}\label{corollary3.18}\cite{GZWG18}
Let $(E,\|\cdot\|)$ be a $\mathcal{T}_{\varepsilon,\lambda}$--complete $RN$ module over $K$ with base $(\Omega,\mathcal{F},P)$, $G$ a $\sigma$--stable $\mathcal{T}_{\varepsilon,\lambda}$--closed subset of $E$, $T:G\rightarrow G$ a $\sigma$--stable mapping, $L: \Omega\rightarrow N$ a random variable and $\alpha\in L^0_+(\mathcal{F})$ such that $\alpha<1$ on $\Omega$ and $\|T^L(x)-T^L(y)\|\leq \alpha\cdot \|x-y\|$ for any $x,y\in G$. Then $T$ has unique one fixed point $x\in G$.
\end{corollary}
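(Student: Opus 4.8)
The plan is to reduce Corollary \ref{corollary3.18} directly to Theorem \ref{theorem3.17} by viewing the $\sigma$--stable closed set $G$ as a $d$--$\sigma$--stable $(\varepsilon,\lambda)$--complete $RM$ space in its own right. First I would equip $G$ with the random metric $d$ inherited from the random norm, namely $d(x,y) = \|x-y\|$ for all $x,y \in G$; as recorded just after Definition \ref{definition2.6}, an $RN$ module is an $RM$ space under this $d$ and, crucially, $\sigma$--stability of a subset in the sense of Definition \ref{definition2.5} coincides with $d$--$\sigma$--stability in the sense of Definition \ref{definition2.6}. Hence the hypothesis that $G$ is $\sigma$--stable tells us precisely that $(G,d)$ is $d$--$\sigma$--stable.

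Next I would check the completeness hypothesis. Since $E$ is $\mathcal{T}_{\varepsilon,\lambda}$--complete and $G$ is $\mathcal{T}_{\varepsilon,\lambda}$--closed in $E$, the subspace $(G,d)$ is itself $(\varepsilon,\lambda)$--complete, because a closed subset of a complete uniform space is complete. It then remains to match the contraction data. The self--map $T: G\rightarrow G$ is $\sigma$--stable, and the map $T^L$ built from the positive--integer--valued random variable $L$ via $T^L(x)=\sum_{k=1}^{\infty}\tilde{I}_{(L=k)}\cdot T^k(x)$ is formed from the same countable concatenation in $G$; by the coincidence of the module concatenation with the $d$--$\sigma$--stable concatenation, this $T^L$ is exactly the map appearing in Theorem \ref{theorem3.17}. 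Finally, the assumed estimate $\|T^L(x)-T^L(y)\|\leq \alpha\cdot\|x-y\|$ is nothing but $d(T^L(x),T^L(y))\leq \alpha\cdot d(x,y)$ for all $x,y\in G$, with the same $\alpha\in L^0_+(\mathcal{F})$ satisfying $\alpha<1$ on $\Omega$.

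With all hypotheses of Theorem \ref{theorem3.17} verified for the $RM$ space $(G,d)$ (playing the role of $E$ there), Theorem \ref{theorem3.17} yields a unique fixed point $x\in G$ of $T$, which is exactly the assertion. The one point demanding genuine care --- rather than an obstacle of real difficulty --- is the bookkeeping that translates the $RN$--module vocabulary ($\sigma$--stability of $G$ and of $T$, $\mathcal{T}_{\varepsilon,\lambda}$--completeness, the norm contraction) into the $RM$--space vocabulary required by Theorem \ref{theorem3.17}; each such translation is either definitional or is supplied by the discussion following Definition \ref{definition2.6} and by the closed--subspace--of--a--complete--space argument, so no new estimate is needed.
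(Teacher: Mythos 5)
Your proposal is correct and is essentially the paper's own proof: the paper likewise observes that $(G,d)$ with $d(g_1,g_2)=\|g_1-g_2\|$ is a $d$--$\sigma$--stable $(\varepsilon,\lambda)$--complete $RM$ space and then applies Theorem \ref{theorem3.17}. You merely spell out in more detail the translations (closed subset of a complete space is complete, module $\sigma$--stability coincides with $d$--$\sigma$--stability) that the paper leaves implicit.
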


\begin{proof}
Since $(G,d)$ is also a $d$--$\sigma$--stable $(\varepsilon,\lambda)$--complete $RM$ space with base $(\Omega,\mathcal{F},P)$, where $d(g_1,g_2)=\|g_1-g_2\|$ for any $g_1,g_2\in G$. Then applying Theorem \ref{theorem3.17} to $(G,d)$ ends the proof.
\end{proof}

In the final part of this paper, let us derive the two random fixed point theorems due to Han\v{s}\cite{H61}, which are the earliest random fixed point theorems in probabilistic functional analysis initiated by A.\v{S}pac\v{e}k and O.Han\v{s}.

\begin{corollary}\label{corollary3.19}\cite{H61}
Let $(X,d)$ be a polish space and $T: \Omega\times X\rightarrow X$ a continuous random operator $($namely $T(\omega,\cdot)$ is continuous for each $\omega\in \Omega )$ such that the following condition holds:\\
$(1)$. $P(\cup^{\infty}_{m=1} \cup^{\infty}_{n=1} \cap_{x\in X} \cap_{y\in X} \{\omega\in \Omega: d(T^n(\omega,x), T^n(\omega,y))\leq (1-\frac{1}{m})d(x,y)\})=1$\\
Then there exists an $X$--valued random element $x^0$ such that $T(\omega,x^0(\omega))=x^0(\omega)$ for almost all $\omega$ in $\Omega$ and $x^0$ is unique a.s.. Here, $T^n(\omega,x)=T(\omega,T^{n-1}(\omega,x))$ and $T^0(\omega,x)=x$ for each $(\omega,x)\in \Omega\times X$ and each $n\geq 1$.
\end{corollary}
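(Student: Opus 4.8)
The plan is to realize Han\v{s}'s theorem as an instance of Theorem \ref{theorem3.17} applied to the space $E:=L^0(\mathcal{F},X)$. By Example \ref{example2.8}, $E$ is $d$--$\sigma$--stable under the random metric induced by $d$, and since $X$ is complete, $E$ is $(\varepsilon,\lambda)$--complete (completeness of strong random elements under convergence in probability). The continuous random operator $T$ is a Carath\'eodory map ($T(\cdot,x)$ measurable for each $x$ and $T(\omega,\cdot)$ continuous for each $\omega$), so for any random element $g^0:\Omega\to X$ the map $\omega\mapsto T(\omega,g^0(\omega))$ is again a strong random element. Hence $T$ induces a well--defined mapping $\hat{T}:E\to E$, sending $g$ to the equivalence class of $T(\cdot,g^0(\cdot))$, exactly as in the proof of Corollary \ref{corollary3.13}. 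Because $T(\omega,\cdot)$ acts pointwise in $\omega$, $\hat{T}$ is $\sigma$--stable, and an easy induction gives that $\hat{T}^k(g)$ is the class of $T^k(\cdot,g^0(\cdot))$ for every $k\in N$.

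The heart of the argument is to manufacture, out of condition $(1)$, a random variable $L:\Omega\to N$ and a coefficient $\alpha\in L^0_+(\mathcal{F})$ with $\alpha<1$ on $\Omega$ so that $\hat{T}^L$ becomes a random contraction. For each $m,n\in N$ I put
$$\Omega_{m,n}=\bigcap_{x,y\in X}\{\omega\in\Omega: d(T^n(\omega,x),T^n(\omega,y))\leq(1-\tfrac{1}{m})\,d(x,y)\}.$$
First I would check that $\Omega_{m,n}\in\mathcal{F}$: fixing a countable dense set $\{x_i\}$ of the polish space $X$, continuity of $T^n(\omega,\cdot)$ lets us replace the uncountable intersection over $x,y$ by the countable intersection over $x_i,x_j$, and each $\omega\mapsto d(T^n(\omega,x_i),T^n(\omega,x_j))$ is measurable since $T^n(\cdot,x_i)$ is measurable. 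Condition $(1)$ says precisely that $\bigcup_{m,n}\Omega_{m,n}$ has probability one, and, just as in Corollary \ref{corollary3.13}, we may assume without loss of generality that this union is all of $\Omega$. Enumerating the pairs $(m,n)$ as $\{(m_k,n_k):k\in N\}$ and disjointifying, $B_k:=\Omega_{m_k,n_k}\setminus\bigcup_{j<k}\Omega_{m_j,n_j}$ gives a countable $\mathcal{F}$--partition $\{B_k:k\in N\}$ of $\Omega$; on $B_k$ I set $L(\omega)=n_k$ and $\alpha(\omega)=1-\frac{1}{m_k}$. Then $L$ is a positive integer--valued random variable and $\alpha=\sum_{k}\tilde{I}_{B_k}\cdot(1-\frac{1}{m_k})\in L^0_+(\mathcal{F})$ satisfies $\alpha<1$ on $\Omega$.

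It then remains to verify the contraction estimate and invoke the earlier theorem. For any $g,h\in E$ with representatives $g^0,h^0$ one has $\hat{T}^L(g)(\omega)=T^{L(\omega)}(\omega,g^0(\omega))$, so on each $B_k$ the inclusion $B_k\subset\Omega_{m_k,n_k}$ yields
$$d(T^{n_k}(\omega,g^0(\omega)),T^{n_k}(\omega,h^0(\omega)))\leq(1-\tfrac{1}{m_k})\,d(g^0(\omega),h^0(\omega))=\alpha(\omega)\,d(g,h)(\omega).$$
Patching over the partition gives $d(\hat{T}^L(g),\hat{T}^L(h))\leq\alpha\cdot d(g,h)$ for all $g,h\in E$. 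Since $\hat{T}$ is $\sigma$--stable and $E$ is $d$--$\sigma$--stable and $(\varepsilon,\lambda)$--complete, Theorem \ref{theorem3.17} furnishes a unique $x\in E$ with $\hat{T}(x)=x$; any representative $x^0$ then satisfies $T(\omega,x^0(\omega))=x^0(\omega)$ for almost all $\omega$, and uniqueness in $E$ translates into a.s. uniqueness of the random fixed point.

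The step I expect to be the main obstacle is the measurable construction in the second paragraph: establishing that each $\Omega_{m,n}$ is measurable (the reduction of the intersection over all $x,y$ to a countable dense subset, together with measurability of the iterates $T^n(\cdot,x_i)$) and then selecting the pair $(m_k,n_k)$, hence $L$ and $\alpha$, measurably and consistently over $\omega$. Once $L$ and $\alpha$ are in hand the contraction estimate is a routine pointwise verification and the conclusion follows immediately from Theorem \ref{theorem3.17}; the genuinely delicate point is packaging the purely pointwise hypothesis $(1)$ into a single globally defined random power $L$ together with a coefficient $\alpha<1$ on all of $\Omega$.
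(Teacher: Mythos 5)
Your proposal is correct and takes essentially the same route as the paper: both induce $\hat{T}$ on the $d$--$\sigma$--stable, $(\varepsilon,\lambda)$--complete space $L^0(\mathcal{F},X)$, package condition $(1)$ into a positive--integer--valued random variable $L$ and a coefficient $\alpha\in L^0_+(\mathcal{F})$ with $\alpha<1$ on $\Omega$ via the countable dense subset and continuity of $T^n(\omega,\cdot)$, and then conclude by Theorem \ref{theorem3.17}. The only (cosmetic) difference is in the bookkeeping: you disjointify over an enumeration of the pairs $(m_k,n_k)$ so that $L$ is constant on each piece, whereas the paper partitions $\Omega$ by $m$ alone using the nondecreasing sets $B_m$ and then defines $L(\omega)$ as the minimal admissible $n$ on each piece; both constructions yield the same contraction estimate $d(\hat{T}^L(g),\hat{T}^L(h))\leq\alpha\cdot d(g,h)$.
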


\begin{proof}
Let $\Omega_1=\cup^{\infty}_{m=1} \cup^{\infty}_{n=1} \cap_{x\in X} \cap_{y\in X} \{\omega\in \Omega: d(T^n(\omega,x), T^n(\omega,y))\leq (1-\frac{1}{m})d(x,y)\}$ and $\Omega_2=\cup^{\infty}_{m=1} \cup^{\infty}_{n=1} \cap_{i=1}^{\infty} \cap_{j=1}^{\infty} \{\omega\in \Omega: d(T^n(\omega,x_i), T^n(\omega,x_j))\leq (1-\frac{1}{m})d(x_i,x_j)\}$, where $\{x_i: i\geq 1\}$ is a countable dense subset of $X$. Then, according to continuity of $T$, $\Omega_1=\Omega_2$ is $\mathcal{F}$--measurable. We can, without loss of generality, assume $\Omega_1=\Omega_2=\Omega$. For each $m\geq 1$, define $B_m=\cup^{\infty}_{n=1} \cap_{i=1}^{\infty} \cap_{j=1}^{\infty}\{\omega\in \Omega: d(T^n(\omega,x_i), T^n(\omega,x_j))\leq (1-\frac{1}{m}) d(x_i,x_j)\}$, then $\{B_m,\: m\geq 1\}$ is a nondecreasing sequence in $\mathcal{F}$ and $\cup_{m=1}^{\infty} B_m=\Omega$, further, let $A_1=B_1$ and $A_n=B_n\setminus A_{n-1}$ for any $n\geq 2$, then $\{A_n: n\in N\}$ forms a countable partition of $\Omega$ to $\mathcal{F}$.
\par
Now, define a positive--integer--valued random variable $L: \Omega\rightarrow N$ by $L(\omega)=\min\{n\geq 1: d(T^n(\omega,x_i), T^n(\omega,x_j))\leq (1-\frac{1}{m})\cdot d(x_i,x_j)$ for any $i$ and $j$ in $N\}$ when $\omega\in A_m$ for some $m\in N$. Then it is easy to check that $L$ is well defined, it is also obvious that $T^L: \Omega\times X\rightarrow X$ defined by $T^L(\omega,x)=T^{L(\omega)}(\omega,x)=T^k(\omega,x)$ when $\omega\in (L=k)$ for any $(\omega,x)\in  \Omega\times X$, is still a continuous random operator.
\par
Again, define a nonnegative random variable $\alpha^0: \Omega\rightarrow [0,1)$  by $\alpha^0(\omega)=1-\frac{1}{m}$ when $\omega\in A_m$ for some $m\in N$. Then, it is easy to see that $d(T^L(\omega,x_i), T^L(\omega,x_j))\leq \alpha^0(\omega)\cdot d(x_i,x_j)$ for each $\omega\in \Omega$ and each $(i,j)\in N\times N$, so that $d(T^L(\omega,x), T^L(\omega,y))\leq \alpha^0(\omega)\cdot d(x,y)$ for each $\omega\in \Omega$ and each $(x,y)\in X\times X$. Further, one also has $d(T^L(\omega,x^0(\omega)), T^L(\omega,y^0(\omega)))\leq \alpha^0(\omega)\cdot d(x^0(\omega), y^0(\omega))$ for each $\omega\in \Omega$ and any two $X$--valued random elements $x^0$ and $y^0$.
\par
Now, let $(L^0(\mathcal{F}, X), d)$ be as in Example \ref{example2.8}, which is a $d$--$\sigma$--stable $(\varepsilon,\lambda)$--complete $RM$ space with base $(\Omega,\mathcal{F},P)$ and define $\hat{T}: L^0(\mathcal{F}, X)\rightarrow L^0(\mathcal{F}, X)$ by $\hat{T}(x)=$ the equivalence class of $T(\cdot, x^0(\cdot))$ for any $x\in L^0(\mathcal{F}, X)$, where $x^0$ is an arbitrarily chosen representative of $x$ and $T(\cdot, x^0(\cdot))(\omega)= T(\omega, x^0(\omega))$ for each $\omega\in \Omega$, then it is easy to check that $\hat{T}^L(x)$ is just the equivalence class of $T^L(\cdot, x^0(\cdot))$, where $x$ and $x^0$ are the same as in the definition of $\hat{T}$. Thus $d(\hat{T}^L(x), \hat{T}^L(y))\leq \alpha\cdot d(x,y)$ for any $x,y\in L^0(\mathcal{F}, X)$. By noticing $\hat{T}$ is obviously $\sigma$--stable, it follows from Theorem \ref{theorem3.17} that $\hat{T}$  has unique one fixed point $x\in L^0(\mathcal{F}, X)$, and an arbitrarily chosen representative $x^0$ of $x$ satisfies our needs.
\end{proof}

\begin{corollary}\label{corollary3.20}
Let $(X,d)$ be a complete metric space, $T: \Omega\times X\rightarrow X$ a continuous strong random operator $($where ``strong" means $T(\cdot,x)$ is an $X$--valued strong random element for each $x\in X$, see Example \ref{example2.8} for the concept of strong random elements$)$ and $\alpha^0: \Omega\rightarrow [0,+\infty)$ a random variable satisfying $\alpha^0<1$ a.s. . If $d(T(\omega,x), T(\omega,y))\leq \alpha^0(\omega)\cdot d(x,y)$ for almost all $\omega$ in $\Omega$ and any given $x,y\in X$, namely $P\{\omega\in \Omega: d(T(\omega,x), T(\omega,y))\leq \alpha^0(\omega)\cdot d(x,y)\}=1$ for any $(x,y)\in X\times X$, then there exists an $X$--valued strong random element $x^0$ such that $T(\omega,x^0(\omega))=x^0(\omega)$ for almost all $\omega$ in $\Omega$, and $x^0$ is unique a.s. .
\end{corollary}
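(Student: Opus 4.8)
The plan is to reduce the statement to the single--valued random contraction principle of Corollary \ref{corollary3.14} by lifting $T$ to a self--mapping $\hat{T}$ of the $RM$ space $(L^0(\mathcal{F},X),d)$ of Example \ref{example2.8}, exactly in the spirit of the proof of Corollary \ref{corollary3.13}, but now with a genuine $L^0$--contraction constant $\alpha<1$ rather than an iterate structure, so that no random exponent $L$ and no appeal to Theorem \ref{theorem3.17} is needed.

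First I would promote the pointwise hypothesis, which only controls a single null set for each \emph{fixed} pair $(x,y)$, to an inequality valid simultaneously along random arguments. Fix two $X$--valued strong random elements $x^0$ and $y^0$; since each has separable range, the closure $X_1$ of $\mathrm{range}(x^0)\cup\mathrm{range}(y^0)$ is separable, so I choose a countable dense set $\{z_i:i\in N\}\subset X_1$. For each pair $(i,j)$ the hypothesis gives a null set outside which $d(T(\omega,z_i),T(\omega,z_j))\leq\alpha^0(\omega)d(z_i,z_j)$; discarding the countable union of these null sets together with $\{\omega:\alpha^0(\omega)\geq 1\}$, and using that $T(\omega,\cdot)$ is continuous for every $\omega$, I would pass to the limit along $z_i\to x^0(\omega)$ and $z_j\to y^0(\omega)$ to obtain $d(T(\omega,x^0(\omega)),T(\omega,y^0(\omega)))\leq\alpha^0(\omega)d(x^0(\omega),y^0(\omega))$ for almost all $\omega$. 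This single--null--set passage through a countable dense set is the main obstacle, being the only place where continuity, separability of strong random elements, and the per--pair a.s. hypothesis must be combined.

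Next I would define $\hat{T}:L^0(\mathcal{F},X)\to L^0(\mathcal{F},X)$ by letting $\hat{T}(x)$ be the equivalence class of $\omega\mapsto T(\omega,x^0(\omega))$ for any representative $x^0$ of $x$. Well--definedness requires checking that $\omega\mapsto T(\omega,x^0(\omega))$ is again a strong random element and is independent of the chosen representative; both follow by writing $x^0$ as the pointwise limit of simple random elements (Example \ref{example2.8}), applying continuity of $T(\omega,\cdot)$, and noting that each $T(\cdot,z)$ has separable range, so the limit is measurable with separable range. The inequality of the previous paragraph then reads $d(\hat{T}(x),\hat{T}(y))\leq\alpha\cdot d(x,y)$ in $L^0(\mathcal{F})$, where $\alpha$ is the equivalence class of $\alpha^0$ and $\alpha<1$ on $\Omega$.

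Finally, since $X$ is complete every $(\varepsilon,\lambda)$--Cauchy sequence in $L^0(\mathcal{F},X)$ has its terms valued in a separable, hence polish, closed subspace, so $(L^0(\mathcal{F},X),d)$ is $(\varepsilon,\lambda)$--complete as in Example \ref{example2.8}. Applying Corollary \ref{corollary3.14} to $\hat{T}$ then yields a unique $x\in L^0(\mathcal{F},X)$ with $\hat{T}(x)=x$; any representative $x^0$ is an $X$--valued strong random element satisfying $T(\omega,x^0(\omega))=x^0(\omega)$ for almost all $\omega$, and the uniqueness of $x$ in $L^0(\mathcal{F},X)$ gives the almost sure uniqueness of $x^0$.
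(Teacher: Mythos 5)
Your proof is correct and takes essentially the same route as the paper's: lift $T$ to a mapping $\hat{T}$ on $(L^0(\mathcal{F},X),d)$, obtain the contraction inequality $d(\hat{T}(x),\hat{T}(y))\leq\alpha\cdot d(x,y)$ from the separability of the ranges of strong random elements combined with the continuity of $T(\omega,\cdot)$ and the per--pair a.s.\ hypothesis, and then apply Corollary \ref{corollary3.14} on the $(\varepsilon,\lambda)$--complete space $L^0(\mathcal{F},X)$. The only difference is one of detail: you write out explicitly the countable--dense--set null--set argument and the well--definedness check that the paper compresses into ``Similarly to the proof of Corollary \ref{corollary3.19}'' and ``it is easy to check.''
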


\begin{proof}
Similarly to the proof of Corollary \ref{corollary3.19}, define $\hat{T}: L^0(\mathcal{F}, X) \rightarrow L^0(\mathcal{F}, X)$ by $\hat{T}(x)=$ the equivalence class of $T(\cdot, x^0(\cdot))$ for any $x\in L^0(\mathcal{F}, X)$, where $x^0$ is an arbitrarily chosen representative of $x$. Since $x^0$ has a separable range and $T$ is a continuous strong random operator, $\hat{T}$ is well defined and $d(\hat{T}(x),\hat{T}(y))\leq \alpha\cdot d(x,y)$ for any $x,y\in L^0(\mathcal{F}, X)$, where $\alpha$ is the equivalence class of $\alpha^0$. Since $L^0(\mathcal{F}, X)$ is $(\varepsilon,\lambda)$--complete, it follows from Corollary\ref{corollary3.14} that $\hat{T}$ has unique one fixed point $x\in L^0(\mathcal{F}, X)$, whose arbitrarily chosen representative $x^0$ satisfies our needs.
\end{proof}

\begin{remark}\label{remark3.21}
When $(X,d)$ is a polish space and $T$ is a continuous random operator, Corollary \ref{corollary3.20} is due to O.Han\v{s} $($see \cite{BR76}$)$. The formulation of our Corollary \ref{corollary3.20} has an advantage that the separability of $X$ can be removed. This advantage continues to be reflected in our study of random fixed point theorems for random nonexpansive operators \cite{GZWY19}, please compare \cite{GZWY19} with \cite{Xu90}. Probabilistic functional analysis initiated by A.\v{S}pac\v{e}k and O.Han\v{s} $($\cite{S55,H61}$)$ are concerned with theories of random elements and random operators. When regarding random elements as points in $RM$ spaces or $RN$ modules or random locally convex modules, and correspondingly regarding random operators as mappings between $RM$ spaces or $RN$ modules and et.al., probabilistic functional analysis can be  naturally regarded as a part of random functional analysis, which is just the idea of developing probabilistic functional analysis in \cite{Guo93}. Now, random functional analysis based on $RM$ spaces, $RN$ modules and random locally convex modules, has undergone a systematic and deep development in the direction of traditional functional analysis, connected with this is that probabilistic functional analysis also has obtained a corresponding development. We may hope that the approach to probabilistic functional analysis will develop a greater power in the further.
\end{remark}


\begin{thebibliography}{99}

\bibitem{B22}
S.Banach, {\it Sur les operations dans les ensembles abstraits et leur applications aux equations int\'{e}grales}, Fund.Math., {\bf3}(1922),133--181.

\bibitem{BAX66}
T.D.Benavides, G.L.Acedo and H.K.Xu, {\it Random fixed points of set--valued operators}, Proc.Amer.Math.Soc., {\bf124}(1996), 831--838.

\bibitem{BR76}
A.T.Bharucha--Reid, {\it Fixed point theorems in probabilistic ananlysis}, Bull.Amer.Math.Soc., {\bf82(5)}(1976), 641--657.

\bibitem{CKV15}
P.Cheridito, M.Kupper and N.Vogelpoth, {\it Conditional analysis on $R^{d}$, Set Optimization and Applications}, Proceedings in Mathematics \& Statistics, {\bf 151}(2015), 179--211.

\bibitem{DS06}
F.Delbaen and W.Schachermayer, {\it The Mathematics of Arbitrage}, Springer--Verlag, Berlin, 2006.

\bibitem{DJKK16}
S.Drapeau, A.Jamneshan, M.Karliczek and M.Kupper, {\it The algebra of conditional sets and the concepts of conditional topology and compactness}, J.Math,Anal.Appl., {\bf 437(1)}(2016), 561--589.

\bibitem{DKKS13}
S.Drapeau, M.Karliczek, M,Kupper and M.Streckfuss, {\it Brouwer fixed point theorem in $(L^0)^d$}, Fixed Point Theory Appl., {\bf 301(1)}(2013).

\bibitem{DS58}
N.Dunford and J.T.Schwartz, {\it Linear Operators$(I)$: General Theory}, John Wilely \& Sons Inc., New York, 1958.

\bibitem{Eke74}
I.Ekeland, {\it On the variational principle}, J. Math. Anal. Appl., {\bf 47} (1974), 324--353.

\bibitem{FMM09}
R.Fierro, C.Martinez and C.H.Morales, {\it Fixed point theorems for random lower semicontinuous mappings}, Fixed Point Theory Appl., {\bf 2009}(2009), ID584178.

\bibitem{FKV09}
D.Filipovi\'{c}, M.Kupper and N.Vogelpoth, {\it Separation and duality in locally $L^{0}-$convex modules}, J.Funct.Anal., {\bf 256} (2009), 3996--4029.

\bibitem{FM14a}
M.Frittelli and M.Maggis, {\it Complete duality for quasiconvex dynamic risk measures on modules of $L^p$--type}, Statist. \& Risk Model, {\bf 31(1)} (2014), 103--128.

\bibitem{FM14b}
M.Frittelli and M.Maggis, {\it Conditionally evenly convex sets and evenly quasi-convex maps}, J.Math.Anal.Appl., {\bf 413} (2014), 169--184.

\bibitem{Guo89}T.X.Guo,{\it The theory of probabilistic metric spaces with applications to random functional analysis}, Master's thesis, Xi'an Jiaotong University (China), 1989.

\bibitem{Guo92}
T.X.Guo, {\it Random metric theory and its applications}, Ph.D thesis, Xi'an Jiaotong University (China), 1992.

\bibitem{Guo93}
T.X.Guo, {\it A new approach to random functional analysis}, in: Proceedings of the first China postdoctral academic conference, The China National Defense and Industry Press, Beijing, 1993, pp.1150--1154.

\bibitem{Guo96}
T.X.Guo, {\it The Radon--Nikod\'{y}m property of conjugate spaces and the $w^*$--equivalence theorem for $w^*$--measurable functions}, Sci. China Math. Ser.A, {\bf 39}(1996), 1034--1041.

\bibitem{Guo96a}
T.X.Guo, {\it Module homomorphisms on random normed modules}, Chinese Northeastern Math.J., {\bf 12}(1996), 102--114.

\bibitem{Guo99}
T.X.Guo, {\it Some basic theories of random normed linear spaces and random inner product spaces}, Acta Anal.Funct.Appl., {\bf 1(2)}(1999), 160--184.

\bibitem{Guo08}
T.X.Guo, {\it The relation of Banach--Alaoglu theorem and Banach--Bourbaki--Kakutani--\v{S}mulian theorem in complete random normed modules to stratification structure}, Sci.China Math.Ser.A, {\bf 51(9)}(2008), 1651--1663.

\bibitem{Guo10}
T.X.Guo, {\it Relations between some basic results derived from two kinds of topologies for a random locally convex module}, J.Funct.Anal., {\bf 258} (2010), 3024--3047.

\bibitem{Guo13}
T.X.Guo, {\it On some basic theorems of continuous module homomorphisms between random normed modules}, J.Funct. Spaces Appl., (2013), Article ID 989102, 13 pages.

\bibitem{Guo09}
T.X.Guo and X.X.Chen, {\it Random duality}, Sci.China Math. Ser.A, {\bf 52}(2009), 2084--2098.

\bibitem{GL05}
T.X.Guo and S.B.Li. {\it The James theorem in complete random normed modules}, J. Math. Anal. Appl.,{\bf 308}(2005), 257--265.

\bibitem{GP01}
T.X.Guo and S.L.Peng, {\it A characterization for an $L(\mu,K)$--topological module to admit enough canonical module homomorphisms}, J.Math.Anal.Appl., {\bf 263}(2001), 580--599.

\bibitem{GS11}
T.X.Guo and G.Shi. {\it The algebraic structure of finitely generated $L^0(\mathcal{F}, K)$--modules and the Helly theorem in random normed modules}, J. Math. Anal. Appl., {\bf 381}(2011), 833--842.

\bibitem{GXC09}
T.X.Guo, H.X.Xiao and X.X.Chen, {\it A basic strict separation theorem in random locally convex modules}, Nonlinear Anal., {\bf 71} (2009), 3794--3804.

\bibitem{GY12}
T.X.Guo and Y.J.Yang, {\it Ekeland's variational principle for an $\bar{L}^0-$valued function on a complete random metric space}, J. Math. Anal. Appl., {\bf 389} (2012), 1--14.

\bibitem{GY96}
T.X.Guo and Z.Y.You. {\it The Riesz's representation theorem in complete random inner product modules and its applications}, Chin. Ann. Math. Ser. A, {\bf 17}(1996), 361--364.

\bibitem{GZWG18}
T.X.Guo, E.X.Zhang, Y.C.Wang and Z.C.Guo. {\it Two fixed point theorems in complete random normed modules and their applications to backward stochastic equations}, 2018, arXiv: 1801.09341v3.

\bibitem{GZWY19}
T.X.Guo, E.X.Zhang, Y.C.Wang and G.Yuan, {\it $L^0$--convex compactness and random normal structure in $L^0(\mathcal{F}, B)$}, 2019, arXiv:1904.03607.

\bibitem{GZWYYZ17}
T.X.Guo, E.X.Zhang, M.Z.Wu, B.X.Yang, G.Yuan and X.L.Zeng, {\it On random convex analysis}, J.Nonlinear Conv.Anal., {\bf 18(11)}(2017), 1967--1996.

\bibitem{GZZ14}
T.X.Guo, S.E.Zhao and X.L.Zeng, {\it The relations among the three kinds of conditional risk measures}, Sci. China Math. {\bf 57(8)}(2014), 1753--1764.

\bibitem{H61}
O.Han\v{s}, {\it Random operator equations}, Proceedings of the $4th$ Berkely symposium on mathematical statistics and probability, Vol.\uppercase\expandafter{\romannumeral2}, Part\uppercase\expandafter{\romannumeral1}, pp.185--202, Univ. of California Press, Berkeley, 1961.

\bibitem{HR87}
L.P.Hansen and S.F.Richard, {\it The role of conditioning information in deducing testable restrictions implied by dynamic asset pricing models},Econometrica, {\bf 55(3)}(1987), 587--613.

\bibitem{HLR91}
R.Haydon, M.Levy and R.Raynaud, {\it Randomly Normed Spaces}, Hermann, Paris,1991.

\bibitem{H75}
C.J.Himmelberg, {\it Measurable relations}, Fund.Math., {\bf 87}(1975), 53--72.

\bibitem{I77}
S.Iton, {\it A random fixed point theorem for a multivalued contraction mapping}, Pacific J.Math., {\bf 68}(1977), 85--90

\bibitem{I79}
S.Iton, {\it Random fixed point theorems with an application to random differential equations in Banach spaces}, J.Math.Anal.Appl., {\bf 67}(1979), 261--273.

\bibitem{JKZ18}
A.Jamneshan, M.Kupper and J.M.Zapata, {\it Parameter--dependent stochastic optimal control in finite discrete time}, 2018, arXiv: 1705.02374v2.

\bibitem{M05}
I.Molchanov, {\it Theory of random sets}, Probability and Its Applications, Springer, Lodon, 2005.

\bibitem{N69}
S.B.Nadler, Jr., {\it Multivalued contraction mappings}, Pacific J.Math., {\bf 30}(1969), 475--488.

\bibitem{SS8305}
B.Schweizer and A.Sklar, {\it Probabilistic Metric Spaces}, Elservier, New York,1983; reissued by Dover Publications, New York, 2005.

\bibitem{S01}
N.Shahzad, {\it Random fixed points of set--valued maps}, Nonlinear Anal., {\bf 45}(2001), 689--692.

\bibitem{S55}
A.\v{S}pac\v{e}k, {\it Zuf\"{a}llige Gleichungen}, Czechoslovak Math.J., {\bf 5}(1955), 462--466.

\bibitem{S56}
A.\v{S}pac\v{e}k, {\it Note on K.Menger's probabilistic geometry}, Czechoslovak Math.J., {\bf 6(81)}(1956), 72--74.

\bibitem{S60}
A.\v{S}pac\v{e}k, {\it Random metric spaces}, Trans. Second Prague conf. inform. theory, statist. decision functions and random processes, 1960, pp. 627--638.

\bibitem{Wu12}
M.Z.Wu, {\it The Bishop-Phelps theorem in complete random normed modules endowed with the $(\varepsilon,\lambda)$-topology}, J.Math.Anal.Appl., {\bf 391}(2012), 648--652.

\bibitem{Wu13}
M.Z.Wu, {\it Farkas' lemma in random locally convex modules and Minkowski--Weyl type results in $L^0(\mathcal{F},R^n)$},J.Math.Anal.Appl., {\bf 404}(2013), 300--309.

\bibitem{WG15}
M.Z.Wu and T.X.Guo, {\it A counterexample shows that not every locally $L^0$--convex topology is necessarily induced by a family of $L^0$--seminorms}, arXiv:1501.04400V1,2015.

\bibitem{Xu90}
H.K.Xu, {\it Some random fixed point theorems for condensing and nonexpansive operators}, Proc. Amer. Math. Soc., {\bf 110}(1990), 395--400.

\bibitem{YZG91}
Z.Y.You, L.H.Zhu and T.X.Guo, {\it Random conjugate spaces for a class of quasinormed linear spaces}, J.Xi'an Jiaotong Univ., {\bf 3}(1991), 133--134(Abstract in Chinese).

\bibitem{Z17}
J.M.Zapata, {\it On the characterization of locally $L^0$--convex topologies induced by a family of $L^0$--seminorms}, J.Conv.Anal., {\bf 24(2)}(2017), 383--391.

\end{thebibliography}
\end{document}